\documentclass[11pt]{amsart}

\usepackage{amsmath,amssymb,amsthm,mathrsfs}
\usepackage{esint}
\usepackage{mathtools}
\mathtoolsset{showonlyrefs}
\usepackage[margin=1.15in]{geometry}
\usepackage[colorlinks=true,citecolor=blue,linkcolor=blue,urlcolor=blue]{hyperref}

\newtheorem{thm}{Theorem}[section]
\newtheorem{lem}[thm]{Lemma}
\newtheorem{coro}[thm]{Corollary}
\newtheorem{prop}[thm]{Proposition}

\numberwithin{equation}{section}

\theoremstyle{definition}
\newtheorem{defn}[thm]{Definition}
\theoremstyle{remark}

\begin{document} 
	
	\title[Positive Mass Theorem with Arbitrary Ends]{Positive Mass Theorem with Arbitrary Ends and Noncompact Boundary}
	\date{\today}
	
	\author{Caiyan Li}
	\address[C.L]{School of Mathematical Sciences, Xiamen University, 361005, Xiamen, P.R. China}
	\email{caiyanli@xmu.edu.cn}
	
	\begin{abstract}
		We prove a positive mass theorem for complete Riemannian manifolds with
		noncompact boundary, a distinguished asymptotically flat half-space end, and
		finitely many additional complete ends with no prescribed asymptotics. If
		$3\leq n\leq7$, $R_g\geq0$, and $H_{\partial M}\geq0$, then $ \mathfrak m(M,g,\mathcal E)\geq0$. 
		Moreover, equality holds if and only if $(M,g)$ is isometric to the Euclidean half-space. The proof combines a density deformation near the distinguished
		end with doubling across the noncompact boundary, local smoothing, and a
		conformal correction. We also obtain the sharp Riemannian Penrose inequality
		when a compact outermost minimal hypersurface separates $\mathcal E$ from all
		the remaining ends; equality holds precisely when the exterior region is a
		Schwarzschild half-space exterior.
	\end{abstract}
	
	\maketitle
	
	\section{Introduction}
	
	The Riemannian positive mass theorem states that a complete asymptotically
	flat manifold with nonnegative scalar curvature has nonnegative ADM mass, and
	that vanishing mass characterizes Euclidean space. Schoen and Yau established
	the theorem in dimensions $3\leq n\leq7$ by minimal-hypersurface methods
	\cite{SY79,SY81}, while Witten proved it in all dimensions under a spin
	assumption \cite{Wit81}; see also \cite{Bart86}.
	
	For asymptotically flat manifolds with noncompact boundary, Almaraz, Barbosa,
	and Lopes de Lima \cite{ABL16} developed the corresponding notion of mass and
	proved positivity and rigidity when the scalar curvature and the boundary mean
	curvature are nonnegative, either in dimensions $3\leq n\leq7$ or under a spin
	assumption. They also established a density theorem adapted to half-space
	asymptotics and a variational formula for the mass; both will be used below.
	Chai \cite{Chai18} gave an alternative proof in dimensions
	$3\leq n\leq7$ using free-boundary minimal hypersurfaces. In dimension three,
	Batista and de Lima \cite{BdL25} obtained a proof based on harmonic level sets.
	
	These results lead to a natural localization question: does the mass of a distinguished
	asymptotically flat end remain nonnegative when no asymptotic conditions are
	imposed on the other ends? In the spin setting, this question was studied by
	Bartnik and Chru\'sciel \cite{BC03,BC05}, and in quantitative form by Cecchini
	and Zeidler \cite{CZ24}. In dimensions at most seven, Lesourd, Unger, and Yau
	\cite{LUY24} first obtained such a result under an asymptotically Schwarzschild
	assumption on the distinguished end. Zhu \cite{Zhu23} subsequently established
	positivity and rigidity under general asymptotically flat decay. In the presence
	of a noncompact boundary, Liu \cite{Liu24} proved a spacetime positive mass
	theorem with arbitrary ends under a spin assumption.
	
	The aim of this paper is to establish the corresponding non-spin Riemannian
	result for manifolds with noncompact boundary in dimensions
	$3\leq n\leq7$. We require half-space asymptotics only along one distinguished
	end and impose no asymptotic structure on the finitely many additional ends,
	which are assumed merely to be complete. This extends \cite{ABL16} beyond the
	one-ended setting and gives a boundary analogue of \cite{LUY24,Zhu23}. The
	essential point is that no geometric control is required on the remaining
	ends or on the noncompact boundary along them.

	We now describe the geometric setting and state the main results precisely.
	Let $(M,g)$ be a smooth, complete, noncompact Riemannian manifold with boundary
	$\partial M$. Assume that $M$ has a distinguished end $\mathcal E$ and
	finitely many additional complete ends.  
	
	\begin{defn}\label{Def A.F.}
		We say that $(M,g)$ is \emph{asymptotically flat along $\mathcal E$} if
		$\mathcal E$ is diffeomorphic to
		$\mathbb R^n_+\setminus\overline{\mathbb B}_1^+$ and, in the corresponding
		coordinates,
		\[
		g_{ij}=\delta_{ij}+h_{ij},
		\]
		where
		\[
		|h|+r|\partial h|+r^2|\partial^2h|\leq Cr^{2-n},
		\qquad r=|x|.
		\]
		We further assume that
		\[
		R_g=O(r^{-q})
		\]
		on $\mathcal E$ for some $q>n$, and that
		\[
		H_{\partial M}\in L^1(\partial M\cap\mathcal E).
		\]
		No asymptotic assumptions are imposed on the
		remaining ends, which are required only to be complete. We refer to this as the
		\emph{asymptotically flat distinguished-end setting with arbitrary remaining
			ends}.
	\end{defn}
	
	\begin{defn}\label{Def mass}
		The mass of $(M,g,\mathcal E)$ is defined from the asymptotic geometry of the
		distinguished end by
		\begin{equation}
		\begin{aligned}
			\mathfrak m(M,g,\mathcal E)
			&=\frac{1}{2(n-1)\omega_{n-1}}\lim_{r\to\infty}
			\Bigg\{
			\int_{\mathbb S_{r,+}^{n-1}}
			(g_{ij,j}-g_{jj,i})\nu^i\,d\mathbb S_{r,+}^{n-1}+
			\int_{\mathbb S_r^{n-2}}
			g_{\alpha n}\eta^\alpha\,d\mathbb S_r^{n-2}
			\Bigg\}.
		\end{aligned}
		\end{equation}
		Here $\omega_{n-1}=|\mathbb S^{n-1}|$,
		$\mathbb S_{r,+}^{n-1}$ is the coordinate hemisphere of radius $r$, $\nu$ is
		its Euclidean outward unit normal, and $\eta$ is the Euclidean outward unit
		conormal of
		$\mathbb S_r^{n-2}=\partial\mathbb S_{r,+}^{n-1}$ in the truncated boundary
		region $(\partial M)_r=\partial M\cap\{|x|\leq r\}$.
	\end{defn}
	
	Throughout the paper, we use the Einstein summation convention, with Roman
	indices $i,j,\ldots$ ranging from $1$ to $n$ and Greek indices
	$\alpha,\beta,\ldots$ ranging from $1$ to $n-1$. Along $\partial M$, the
	vectors $\{\partial_\alpha\}$ are tangent to the boundary, whereas
	$\partial_n$ points inward.
	
	We can now state the main result.
	
	\begin{thm}\label{Thm: main A}
		Let $3\leq n\leq7$, and let $(M,g,\mathcal E)$ be asymptotically flat along the
		distinguished end $\mathcal E$, with finitely many arbitrary additional
		complete ends. If
		\[
		R_g\geq0 \quad\text{on }M,
		\qquad
		H_{\partial M}\geq0 \quad\text{on }\partial M,
		\]
		then
		\[
		\mathfrak m(M,g,\mathcal E)\geq0.
		\]
		Moreover, equality holds if and only if
		\[
		(M,g)\cong(\mathbb R^n_+,\delta),
		\]
		with $\mathcal E$ corresponding to the standard end of the Euclidean
		half-space.
	\end{thm}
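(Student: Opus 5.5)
We argue by contradiction and reduce to the boundaryless arbitrary-ends theorem of Zhu \cite{Zhu23} (or of Lesourd--Unger--Yau \cite{LUY24} after a harmonic-asymptotics reduction). Suppose $\mathfrak m(M,g,\mathcal E)<0$.

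\emph{Density reduction on $\mathcal E$.} Using the density theorem of \cite{ABL16}, localized to the distinguished end with a cutoff so that the metric is unchanged away from $\mathcal E$, we replace $g$ by a metric $\tilde g$ that agrees with $g$ outside a neighborhood of infinity in $\mathcal E$ and satisfies the following:
\begin{itemize}
\item $\tilde g$ is harmonically asymptotic to a Schwarzschild half-space of mass $\tilde m<0$, with $|\tilde m-\mathfrak m|$ small;
\item $R_{\tilde g}\ge0$ and $H_{\tilde g}\ge0$ hold everywhere, with $R_{\tilde g}=0$ near infinity;
\item $\partial_n\tilde g_{\alpha n}=0$ along $\partial M$ far out, so that the boundary is totally geodesic at infinity.
\end{itemize}
The density deformation near the distinguished end does not see the other ends, because the conformal factor solves a PDE on all of $M$. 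We therefore solve this PDE with Neumann-type data on an exhaustion and pass to limits. Completeness of the remaining ends is used only to obtain local compactness of solutions. The positivity of the conformal factor follows from $R\ge0$, $H\ge0$ and the strong maximum principle. This step uses the variational formula for mass from \cite{ABL16}.

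\emph{Doubling across the boundary.} Let $\hat M=M\cup_{\partial M}M$ with the reflected metric $\hat g$. The metric $\hat g$ is Lipschitz across $\partial M$, and its distributional scalar curvature there is $2(H_++H_-)\ge0$. The distinguished end doubles to a genuine asymptotically flat end, diffeomorphic to $\mathbb R^n\setminus \overline{\mathbb B}_1$, with ADM mass $2\tilde m<0$, after normalizing the boundary term in $\mathfrak m$. Each other end doubles to a complete end. We then apply the Miao-type local smoothing near $\partial M$: mollify in a collar of width $\delta$ and repair the negative part of the scalar curvature by a conformal factor $u_\delta$. Here $u_\delta$ solves $-c_n\Delta u+R_-u=0$ with $u\to1$ at the distinguished end, where $\|R_-\|_{L^{p}}\to0$ for some $p>n/2$. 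The resulting metric is smooth and complete, has $R\ge0$, and has mass $2\tilde m+o(1)<0$. This contradicts \cite{Zhu23}. The main obstacle is this conformal correction. The collar $\partial M$ is noncompact and runs into the arbitrary ends, so global $L^{n/2}$ smallness of $R_-$ and a uniform Sobolev inequality are unavailable. We would first localize: choose a compact exhaustion $K\subset\hat M$ and smooth only where needed, using a collar width $\delta(x)$ that decays along the other ends. Then we solve the conformal equation on a large bounded domain containing the distinguished end's core, with Dirichlet data $1$. On the remaining ends we exploit the fact that Zhu's theorem tolerates an incomplete region far away; alternatively, we replace those ends by the $\mu$-bubble argument of \cite{Zhu23} directly, which only needs $R\ge0$ on a compact region together with the contradiction with the negative mass end.

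\emph{Rigidity.} If $\mathfrak m=0$, we first show that $\operatorname{Ric}=0$ and that the boundary is totally geodesic. Perturb $g$ as $g_t=g-t\,\eta\operatorname{Ric}$ with compact support, or use the boundary analogue perturbing $H$. Then conformally restore $R\ge0$ and $H\ge0$; by the variational formula of \cite{ABL16}, the mass decreases to a negative value unless $\operatorname{Ric}\equiv0$ and $H\equiv0$, which contradicts the inequality part. Given $\operatorname{Ric}=0$, the Bishop--Gromov comparison on the doubled manifold and the Euclidean volume growth of $\mathcal E$ force the other ends to be absent. An alternative is to apply the splitting argument of \cite{Zhu23} for extra ends. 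Then the manifold is one-ended, and the rigidity in \cite{ABL16} gives $(M,g)\cong(\mathbb R^n_+,\delta)$.
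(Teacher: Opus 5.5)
Your architecture matches the paper's: density deformation near $\mathcal E$ through an exhaustion-built conformal factor, doubling, Miao smoothing plus conformal correction, and compactly supported variations for rigidity. But the step that actually produces the contradiction is not closed, because the metric you end up with is never both complete and scalar-nonnegative. If you truncate the double and solve the conformal equation on a bounded domain with Dirichlet data $1$, the result is incomplete at the artificial boundary. \cite{Zhu23} is a theorem about complete manifolds with $R\ge0$ everywhere, and it does not tolerate an incomplete region far away. If instead you keep the whole double, you cannot remove $(R_\delta)_-$ along the noncompact $\mathcal S$. On the arbitrary ends $\partial M$ has no Gaussian collar of fixed size and no curvature or Sobolev control, so a decaying width $\delta(x)$ does not give a solvable conformal problem. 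Proposition~\ref{Prop: the conformal factor} and \cite[Proposition~2.2]{Zhu23} need the potential compactly supported in a neighborhood $U$ of the end with $\overline{U\setminus\mathcal E}$ compact.

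The missing ingredient is a quantitative shielding theorem, which the paper takes from \cite[Corollary~1.6]{LLU}. For metrics with mass at most $m_0/2<0$ and bounded $W^{2,p}_{-q}$ norm on the end, there is $D>0$ such that $R<0$ or incompleteness must occur within distance $D$ of the end. Your remark that ``the $\mu$-bubble argument only needs $R\ge0$ on a compact region'' restates this, but it omits the order of choices that makes it usable:
\begin{itemize}
\item first fix $m_0=2\mathfrak m(M,\bar g,\mathcal E)$ and $D$;
\item then choose $\widetilde U$ with $\operatorname{dist}_{\widetilde g}(\widetilde{\mathcal E},\partial\widetilde U)>3D$;
\item only then let $\delta\to0$, checking three things: the corrected masses stay below $m_0/2$; the weighted norms on $\widetilde{\mathcal E}$ stay uniformly bounded, so the same $D$ applies; and $\operatorname{dist}_{\widehat g_\delta}(\widetilde{\mathcal E},\partial\widetilde U)>D$, which uses $u_\delta\ge1$ and uniform convergence on $\widetilde U$.
\end{itemize}
With this in place, your truncation-with-Dirichlet-data variant works as well as the paper's device of gluing in an arbitrary complete metric $h$ outside $\widetilde W$. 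The paper keeps completeness and pushes the defect into negative scalar curvature far away; yours keeps $R\ge0$ and pushes it into incompleteness.

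\emph{Density step.} Positivity of the conformal factor does not follow from $R\ge0$, $H\ge0$ and the maximum principle. The potentials $a_n^{-1}(R_{g_s}-\zeta_sR_g)$ and $b_n^{-1}(H_{g_s}-\zeta_sH_g)$ are unsigned on the transition annulus. Positivity comes instead from the smallness of their negative parts via the Sobolev and trace inequalities. Also, the deformed metric differs from $g$ everywhere by this global factor, so your claim that it agrees with $g$ away from infinity is wrong; cutting the factor off would destroy $R\ge0$.

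\emph{Rigidity step.} You need $A\equiv0$, not just $H\equiv0$, both for the double to be smooth and Ricci-flat and for your comparison argument. The first variation is $\int_M\langle k,{\rm Ric}\rangle+\int_{\partial M}\langle k^T,A\rangle$. With $k=-\eta\,{\rm Ric}$ and $\eta\neq0$ on $\partial M$, it is not signed. So take $k$ supported in the interior to get ${\rm Ric}=0$, then take $k^T=-\eta A$ to get $A=0$. You must also check $\inf u_t>0$ so that the inequality part applies to the corrected metrics. Once ${\rm Ric}=0$ and $A=0$, your Bishop--Gromov argument on the smooth complete double does give $(\mathbb R^n,\delta)$ directly. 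The volume ratio is nonincreasing, tends to $1$ as $r\to0$, and is at least $1-o(1)$ as $r\to\infty$ because of the asymptotically flat end. This is a valid, more elementary substitute for the paper's appeal to \cite[Theorem~1.2]{Zhu23}.
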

	
	The proof of nonnegativity is localized near the distinguished end. We first apply a density
	deformation modeled on \cite{ABL16} so that the metric is conformally flat and
	scalar-flat near infinity and the boundary is minimal there. We then double
	across the noncompact boundary. Since the doubling hypersurface need not admit
	a uniform Gaussian collar, the corner is smoothed only in a prescribed
	neighborhood of the distinguished end; away from that neighborhood, the double
	is completed using an arbitrary smooth complete metric. A conformal correction
	removes the small negative part of the scalar curvature in the protected region
	without affecting the limiting mass. The quantitative localization result
	\cite[Corollary~1.6]{LLU} then rules out negative mass. In the equality case, a
	compactly supported variation of the metric gives ${\rm Ric}_g=0$ and $A_g=0$,
	after which the rigidity statement follows by doubling and applying
	\cite[Theorem~1.2]{Zhu23}.

	If a compact outermost minimal hypersurface separates $\mathcal E$ from all
	the additional ends, then the exterior region containing $\mathcal E$ is a
	one-ended asymptotically flat half-space. The sharp Penrose inequality and its
	rigidity statement therefore follow directly from Eichmair and Koerber
	\cite{EK23}. We record this consequence for completeness; the new content of
	the paper is Theorem~\ref{Thm: main A}.
	
	\begin{coro}
		Let $3\leq n\leq7$, and let $(M,g,\mathcal E)$ satisfy the distinguished-end
		asymptotic assumptions above. Suppose that a nonempty compact separating
		minimal hypersurface $\Sigma$, whose components are either closed or
		free-boundary, separates $\mathcal E$ from all the remaining ends. Let
		$M_{\mathcal E}(\Sigma)$ denote the closure of the component of
		$M\setminus\Sigma$ containing $\mathcal E$. Assume that $\Sigma$ is outermost
		relative to $\mathcal E$, in the sense that every compact minimal closed or
		free-boundary hypersurface in $M_{\mathcal E}(\Sigma)$ is a component of
		$\Sigma$. If
		$R_g\geq0$ on $M_{\mathcal E}(\Sigma)$ and
		$H_{\partial M}\geq0$ on
		$\partial M\cap M_{\mathcal E}(\Sigma)$, then
		\[
		\mathfrak m(M,g,\mathcal E)
		\geq
		2^{-\frac n{n-1}}
		\left(
		\frac{|\Sigma|_g}{\omega_{n-1}}
		\right)^{\frac{n-2}{n-1}}.
		\]
		Equality holds if and only if
		$\bigl(M_{\mathcal E}(\Sigma),g\bigr)$ is isometric to a Schwarzschild
		half-space exterior.
	\end{coro}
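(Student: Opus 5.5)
The plan is to reduce to the one-ended Penrose inequality of Eichmair and Koerber \cite{EK23} by restricting to the exterior region $M_{\mathcal E}(\Sigma)$. The proof has three parts: checking that this region is admissible for \cite{EK23}, transferring the mass and hypotheses, and identifying the equality case.

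\textbf{Admissibility of the exterior region.} Since $\Sigma$ is compact and separates $\mathcal E$ from all remaining ends, $M_{\mathcal E}(\Sigma)$ is a complete manifold with boundary and corners. Its boundary consists of two pieces: the noncompact part $\partial M\cap M_{\mathcal E}(\Sigma)$, and the compact horizon $\Sigma$. The free-boundary components of $\Sigma$ meet $\partial M$ orthogonally. This region has exactly one end, namely $\mathcal E$, because every other end of $M$ lies in a different component of $M\setminus\Sigma$. Compactness of $\Sigma$ ensures that $M_{\mathcal E}(\Sigma)\setminus\mathcal E$ is relatively compact. I will confirm this by checking that $M_{\mathcal E}(\Sigma)$ minus a large coordinate half-ball is a closed subset of a compact region bounded by $\Sigma$ and a coordinate hemisphere.

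\textbf{Transfer of mass and hypotheses.} The mass $\mathfrak m(M,g,\mathcal E)$ in Definition~\ref{Def mass} depends only on the metric on $\mathcal E$. It therefore coincides with the mass of the one-ended manifold $\bigl(M_{\mathcal E}(\Sigma),g\bigr)$ in the sense of \cite{ABL16}. The decay assumptions in Definition~\ref{Def A.F.}, namely $h=O(r^{2-n})$ with two derivatives, $R_g=O(r^{-q})$ with $q>n$, and $H_{\partial M}\in L^1$, fall within the asymptotic class used in \cite{EK23}. The curvature hypotheses $R_g\ge0$ and $H_{\partial M}\ge0$ are assumed on precisely this region. The outermost assumption relative to $\mathcal E$ is, word for word, the condition that $\Sigma$ is the outermost minimal free-boundary or closed hypersurface in the exterior region. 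The dimension range $3\le n\le7$ matches theirs.

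\textbf{Conclusion and equality case.} With these checks, the inequality is exactly \cite{EK23} applied to $\bigl(M_{\mathcal E}(\Sigma),g\bigr)$, including their normalization constant $2^{-n/(n-1)}$. Note that their half-space Schwarzschild horizon has area equal to half that of the full sphere, which is why $\omega_{n-1}$ appears with this exponent. The rigidity part of \cite{EK23} yields the equality case. For the converse direction, a Schwarzschild half-space exterior attains equality by direct computation.

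\textbf{Main obstacle.} I expect the hardest step to be verifying that the hypotheses in \cite{EK23} match precisely. Their notion of outermost, their allowance for closed versus free-boundary horizon components, and their decay rates must all agree with ours. If their theorem requires a connected horizon or slightly stronger decay, I would first try a density argument in the spirit of \cite{ABL16}, deforming near infinity while keeping $\Sigma$ fixed. Failing that, I would restrict the statement to match their hypotheses.
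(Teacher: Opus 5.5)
Your proposal is correct and follows the paper's proof: the paper likewise observes that $M_{\mathcal E}(\Sigma)$ is a one-ended asymptotically flat half-space with horizon boundary $\Sigma$ whose mass equals $\mathfrak m(M,g,\mathcal E)$, and then invokes \cite[Corollary~10]{EK23} for the inequality and \cite[Theorem~12]{EK23} for the equality case. Your explanation of the constant is incomplete. The factor $2^{-n/(n-1)}$ comes from halving both the horizon area and the mass, since the hemispherical flux in Definition~\ref{Def mass} gives half the ADM mass of the double; it does not come from the area alone.
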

	
	\begin{proof}
		Since $\Sigma$ separates $\mathcal E$ from every other end, the exterior
		region $M_{\mathcal E}(\Sigma)$ contains no end other than $\mathcal E$.
		With the induced metric, it is therefore an asymptotically flat half-space
		with horizon boundary $\Sigma$ in the sense of \cite{EK23}. Moreover,
		\[
		\mathfrak m\bigl(M_{\mathcal E}(\Sigma),g\bigr)
		=\mathfrak m(M,g,\mathcal E).
		\]
		The inequality follows from \cite[Corollary~10]{EK23}, and the equality
		characterization follows from \cite[Theorem~12]{EK23}.
	\end{proof}

	\subsection*{Acknowledgments}

	The author would like to thank Jintian Zhu for helpful discussions. This work
	was partially supported by NSFC (Grant No.~12501274).

	\section{Preparatory constructions}
	\label{sec:preparatory}

We begin with the analytic and geometric constructions used in the proof.
Throughout the paper, subscripts indicate the metric with respect to which
geometric quantities are computed. Set
$a_n=4(n-1)/(n-2)$ and $b_n=2(n-1)/(n-2)$, and let
$L_g=-a_n\Delta_g+R_g$ and
$B_g=b_n\partial_{\eta_g}+H_g$ denote the conformal Laplacian and the
conformal boundary operator, respectively. For every positive function $u$,
the metric $g_{u}=u^{\frac{4}{n-2}}g$ satisfies
\[
R_{g_{u}}=u^{-\frac{n+2}{n-2}}L_gu,
\qquad
H_{g_{u}}=u^{-\frac{n}{n-2}}B_gu.
\]

We also fix a smooth positive function $r$ on $M$ that agrees with the Euclidean radius $|x|$ sufficiently far out along the distinguished end
$\mathcal E$.

\subsection{Conformal deformation at the distinguished asymptotically flat end}
Our first task is to refine the metric near the distinguished asymptotically
flat end $\mathcal E$ without imposing any conditions on the remaining ends.

The required conformal factor will be obtained from the following Sobolev
inequality \cite{SY79,Zhu23} and Sobolev trace inequality
\cite[Theorem 3.3]{Escobar90}.
\begin{lem}\label{Lem: Sobolev}
	Let $U\supset\mathcal E$ be an open neighborhood of the distinguished
	asymptotically flat end $ \mathcal E$ such that
	$\overline{U\setminus\mathcal E}$ is compact in $M$. Then there exist positive constants $c_S$ and $c_T$, depending only on $U$ and $g$, such that 
	\begin{equation}\label{Eq: Sobolev}
		c_S\left(\int_{  U}|\phi|^\frac{2n}{n-2}\,\mathrm d\mu_g\right)^{\frac{n-2}{n}}\leq \int_{  U} |\nabla\phi|^2\,\mathrm d\mu_g, 
	\end{equation}
	and  
	\begin{equation}\label{Eq: Sobolev trace}
		c_T\left(\int_{\partial M\cap U}|\phi|^\frac{2(n-1)}{n-2}\,\mathrm d\sigma_g\right)^{\frac{n-2}{n-1}}\leq \int_{U}|\nabla\phi|^2\,\mathrm d\mu_g, 
	\end{equation}
	for every $\phi\in C_c^\infty( \bar U)$.
\end{lem}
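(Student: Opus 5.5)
The plan is to reduce both inequalities to the corresponding Euclidean half-space inequalities on the far part of the end, and to handle the compact remainder by a patching and contradiction argument. Fix $R_0$ large so that on $\{r\ge R_0\}\cap\mathcal E$ the metric is uniformly equivalent to $\delta$, say $\tfrac12\delta\le g\le 2\delta$. There the volume forms, boundary measures and gradient norms of $g$ and $\delta$ are comparable with universal constants. On $\mathbb R^n_+$ we have the Euclidean Sobolev inequality for $C_c^\infty(\overline{\mathbb R^n_+})$, which follows by even reflection from the one on $\mathbb R^n$. We also have Escobar's sharp trace inequality $\|\phi\|_{L^{2(n-1)/(n-2)}(\partial\mathbb R^n_+)}^2\le C\|\nabla\phi\|_{L^2(\mathbb R^n_+)}^2$ \cite[Theorem 3.3]{Escobar90}. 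Since $\mathcal E\cong\mathbb R^n_+\setminus\overline{\mathbb B}_1^+$, functions supported in $\{r>R_0\}$ extend by zero to $\overline{\mathbb R^n_+}$. Hence both inequalities hold for them, with constants depending only on $n$.

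For general $\phi\in C_c^\infty(\bar U)$, take a cutoff $\chi$ with $\chi=1$ on $\{r\ge 2R_0\}\cap\mathcal E$ and $\chi=0$ on $\{r\le R_0\}$. Write $\phi=\chi\phi+(1-\chi)\phi$. The first piece is controlled by the Euclidean inequalities, with an error term $\int|\nabla\chi|^2\phi^2$. The second piece lives on the compact set $K=\overline{U\setminus\{r>2R_0\}}$, a compact manifold with corners where $\partial M$ meets $\partial U$. Rather than estimate this piece directly, I would argue by contradiction, which absorbs all lower-order terms at once.

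Suppose \eqref{Eq: Sobolev} fails. Then there are $\phi_k$ with $\|\phi_k\|_{L^{2n/(n-2)}(U)}=1$ and $\|\nabla\phi_k\|_{L^2(U)}\to0$. On the compact region $K$, the Poincaré inequality modulo constants plus Rellich compactness in $H^1(K)$ give $\phi_k-c_k\to0$ in $L^2(K)$ for constants $c_k$. The local Sobolev inequality on the bounded Lipschitz domain $\mathrm{int}\,K$ then gives convergence in $L^{2n/(n-2)}(K)$ as well. Apply the end estimate to $\chi\phi_k$, using $|\nabla(\chi\phi_k)|\le|\nabla\phi_k|+|\nabla\chi||\phi_k|$, with $\nabla\chi$ supported in $K$. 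This gives $\|\chi\phi_k\|_{L^{2n/(n-2)}}\lesssim o(1)+|c_k|$.

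The key point is that $\phi_k$ has compact support while the end is infinite. Compare $\phi_k$ on the annulus $A=\{R_0<r<2R_0\}$ with $\chi'\phi_k$, where $\chi'$ vanishes on $A$'s inner part and equals $1$ further out. Since $\phi_k\to c_k$ on $A$, the Euclidean Sobolev inequality applied to the compactly supported function $\chi\phi_k$ on the infinite end, which equals roughly $c_k$ on a set of fixed positive measure, forces $|c_k|\lesssim\|\nabla(\chi\phi_k)\|_{L^2}\lesssim o(1)+|c_k|\cdot\|\nabla\chi\|_{L^2}$. To make this work I would choose $\chi$ with small Dirichlet energy, i.e. a logarithmic cutoff over $R_0<r<R_1$, with $\|\nabla\chi\|_{L^2}$ small when $n\ge3$ and $R_1/R_0$ large. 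That yields $c_k\to0$, hence $\|\phi_k\|_{L^{2n/(n-2)}}\to0$, a contradiction.

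The trace inequality \eqref{Eq: Sobolev trace} follows the same scheme. On $K$ one uses the compact trace embedding $H^1(K)\to L^{2(n-1)/(n-2)}(\partial M\cap K)$, which is only continuous and not compact at the critical exponent. Continuity suffices here, since $\phi_k-c_k\to0$ in $H^1(K)$ once the gradients tend to zero. The end part uses Escobar's inequality, and $c_k\to0$ by the argument above.

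I expect the main obstacle to be the bookkeeping of the constant mode: ensuring the gradient alone controls the function, which is exactly where the infinite volume of the half-space end with $n\ge3$ is used, via the small-energy logarithmic cutoff. A secondary issue is the regularity of $K$ at the corner $\partial M\cap\partial U$. I would handle it by shrinking or modifying $U$ slightly so that $\partial U$ meets $\partial M$ transversally, which keeps $K$ Lipschitz. No control on the other ends is needed, because $\bar U$ meets them only in a compact set.
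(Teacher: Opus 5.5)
The paper gives no argument of its own for this lemma, only the citations \cite{SY79,Zhu23} and \cite[Theorem 3.3]{Escobar90}. Your architecture is the standard one behind those citations: Euclidean Sobolev and Escobar inequalities on the far part of $\mathcal E$, Poincar\'e--Sobolev modulo constants on a compact piece $K$, and a contradiction argument to kill the constant mode $c_k$. However, the one step where $n\ge3$ enters rests on a false claim.

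A logarithmic cutoff over $R_0<r<R_1$ does \emph{not} have small Dirichlet energy when $n\ge3$. From $|\nabla\chi|=(r\log(R_1/R_0))^{-1}$ you get $\int|\nabla\chi|^2\sim R_1^{n-2}/\log^2(R_1/R_0)\to\infty$. More generally, any function vanishing on $\{r\le R_0\}$ and equal to $1$ near infinity has energy at least the capacity of $\{r\le R_0\}$, which is $\gtrsim R_0^{n-2}>0$. Cutoffs of arbitrarily small energy exist precisely when the end is parabolic, as for $n=2$. If they existed here, $1-\chi\in C_c^\infty(\bar U)$ would have $\|\nabla(1-\chi)\|_{L^2}\to0$ while $\|1-\chi\|_{L^{2n/(n-2)}(U)}$ stays bounded below, contradicting \eqref{Eq: Sobolev} itself. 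So, with a set of fixed measure on the left, your inequality $|c_k|\lesssim o(1)+|c_k|\,\|\nabla\chi\|_{L^2}$ does not close, and $c_k\to0$ is not established. The trace part inherits the same gap through ``$c_k\to0$ by the argument above''.

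The repair is to let the comparison set grow instead of shrinking the cutoff energy. Keep a fixed cutoff $\chi$, equal to $0$ for $r\le R_0$ and $1$ for $r\ge2R_0$. Enlarge the compact piece to $K=\overline{U\cap\{r\le R_2\}}$ with $R_2\gg R_0$, fixed before letting $k\to\infty$. On $S=\{2R_0\le r\le R_2\}$ you have $\chi\phi_k=\phi_k$, which is within $o(1)$ of $c_k$ in $L^{2n/(n-2)}$. The end Sobolev inequality, with constant $C_E$, then gives $|c_k|\,|S|^{\frac{n-2}{2n}}\le o(1)+C_E|c_k|\,\|\nabla\chi\|_{L^2}$. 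Here $|S|^{\frac{n-2}{2n}}\sim R_2^{(n-2)/2}$, while $C_E\|\nabla\chi\|_{L^2}\sim R_0^{(n-2)/2}$ does not depend on $R_2$. For $R_2$ large this forces $c_k\to0$, and this is exactly where $n\ge3$ is used. Alternatively, bound $\|\phi_k\nabla\chi\|_{L^2}\le\|\nabla\chi\|_{L^n}\|\phi_k\|_{L^{2n/(n-2)}}$; it is the scale-invariant $L^n$ norm of the logarithmic cutoff that is small, not its $L^2$ norm.

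Once \eqref{Eq: Sobolev} holds, the trace inequality needs no second contradiction argument. It follows from the local trace theorem on $K$, Escobar's inequality on the end, and $\|\phi\|_{L^2(K)}\le C\|\phi\|_{L^{2n/(n-2)}(K)}$.

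Finally, you cannot ``shrink or modify $U$'': the inequality concerns the given $U$ and functions that need not vanish on $\partial U\cap\mathring M$. Connectedness of $U$ and Lipschitz regularity of $\partial U$, transversal to $\partial M$, should instead be read as implicit hypotheses. The lemma is false if $U$ has a compact component, and your local Sobolev and trace theorems on $K$ need the regularity. Both hold for the smooth domains the paper uses later.
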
 

With these inequalities in hand, we establish the required solvability result.
\begin{prop}\label{Prop: the conformal factor}
	Let $(M,g,\mathcal E)$ be a $C^2$ asymptotically flat manifold with arbitrary ends. Let $U$, $c_S$, and $c_T$ be as in Lemma~\ref{Lem: Sobolev}. Assume that $f\in C^\infty(M)$ and $\tilde f\in C^\infty(\partial M)$ have compact support contained in $U $ and $\partial M \cap U$, respectively, and that their negative parts $f_-$ and $\tilde f_-$ satisfy
	\begin{equation}\label{Eq: f slight negative}
		\left(\int_{U }|f_-|^{\frac{n}{2}}\,\mathrm d\mu_g\right)^{\frac{2}{n}}
		\le \frac{c_S}{4},
		\qquad
		\left(\int_{ \partial M \cap U}|\tilde f_-|^{n-1}\,\mathrm d\sigma_g\right)^{\frac{1}{n-1}}
		\le \frac{c_T}{4}.
	\end{equation}
	Then
	\begin{equation}\label{system u}
		\left\{
		\begin{aligned}
			\Delta_g u-fu&=0, &&\text{in }M,\\
			\frac{\partial u}{\partial\mathbf n}+\tilde f u&=0, &&\text{on }\partial M,\\
			u(x)&\to1, &&\text{as }r(x)\to\infty\text{ along }\mathcal E,
		\end{aligned}
		\right.
	\end{equation}
	admits a positive solution $u$. Moreover, along the asymptotically flat end
	$\mathcal E$, we have $u=1+Ar^{2-n}+\omega$, where
	\[
	A=-\frac{2}{(n-2)|\mathbb S^{n-1}|}\left(\int_{U }fu\,\mathrm d\mu_g+\int_{\partial M \cap U}\tilde f u\,\mathrm d\sigma_g\right),
	\]
	and $|\omega|+r|\partial\omega|+r^2|\partial^2\omega|\le Cr^{1-n}$.
	In addition,
	\[
	\int_{\partial U\cap \mathring{M}}\frac{\partial u}{\partial\mathbf n'}\,\mathrm d\sigma_g=0,
	\qquad
	\int_{\partial U\cap \mathring{M}}u\frac{\partial u}{\partial\mathbf n'}\,\mathrm d\sigma_g\le0.
	\]
	The solution selected by the exhaustion construction also satisfies
	$\inf_Mu>0$.
	Here $\mathbf n$ denotes the outward unit normal to $M$ along $\partial M$, and $\mathbf n'$ denotes the outward unit normal to the domain $U$ along $\partial U\cap M$. 
\end{prop}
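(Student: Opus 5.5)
We will construct $u$ as $u=1+v$, where $v$ decays along $\mathcal E$, by exhausting $M$ with bounded domains. The key point is that $f$ and $\tilde f$ are supported in $U$, so outside $U$ the problem is homogeneous: $u$ is harmonic there with homogeneous Neumann data on $\partial M$. We will therefore work on an exhaustion $\Omega_k$ of $M$ by compact domains with $\Omega_k\supset\overline{U\setminus\mathcal E}$ and $\Omega_k\cap\mathcal E=\{r\le\rho_k\}$, where $\rho_k\to\infty$. The remaining ends are cut off far out by smooth hypersurfaces $S_k$ meeting $\partial M$ transversally.

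On each $\Omega_k$ we solve $\Delta v_k-fv_k=f$ in $\Omega_k$ and $\partial_{\mathbf n}v_k+\tilde fv_k=-\tilde f$ on $\partial M\cap\Omega_k$. We impose $v_k=0$ on the far sphere $\{r=\rho_k\}$ in $\mathcal E$ and the homogeneous Neumann condition $\partial v_k/\partial\mathbf n'=0$ on $S_k$ along the other ends. With this choice no constant is prescribed on ends without asymptotics.

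Solvability and uniform bounds come from coercivity of the quadratic form
\[
Q(\phi)=\int|\nabla\phi|^2+\int f\phi^2+\int_{\partial M}\tilde f\phi^2 .
\]
Since $f_-$ and $\tilde f_-$ are supported in $U$, we apply H\"older and then Lemma~\ref{Lem: Sobolev} to $\phi$ restricted to $U$. This gives
\[
\int f_-\phi^2\le \|f_-\|_{n/2}\|\phi\|^2_{L^{2n/(n-2)}(U)}\le \tfrac14\int_U|\nabla\phi|^2,
\]
and the trace term obeys the same bound. The step needing care is that Lemma~\ref{Lem: Sobolev} is stated for $\phi\in C_c^\infty(\bar U)$, while $\phi$ need not vanish on $\partial U\cap\mathring M$. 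I would first try to fix this by enlarging $U$ slightly and multiplying $\phi$ by a cutoff $\chi$ equal to $1$ on $\mathrm{supp} f$. The resulting lower-order term has to be absorbed, which works because we get a Sobolev-type bound on the full $\Omega_k$ from decay along $\mathcal E$ (the Dirichlet condition at $r=\rho_k$). Hence $Q(\phi)\ge\frac12\int|\nabla\phi|^2$ for functions vanishing at $r=\rho_k$, uniformly in $k$. Lax--Milgram then yields $v_k$ with $\int|\nabla v_k|^2+\|v_k\|_{L^{2n/(n-2)}(U)}^2\le C$ independent of $k$; the right-hand sides $f,\tilde f$ are paired with $\phi$ through the same inequalities.

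Next we pass to the limit. Local elliptic estimates, Schauder up to the boundary with oblique/Neumann data, give $C^{2,\alpha}_{loc}$ convergence of a subsequence to $v$, and $u=1+v$ solves \eqref{system u}. To get positivity and $\inf u>0$, test against $u_k^-=(1+v_k)^-$. This function vanishes near $r=\rho_k$, and coercivity forces $Q(u_k^-)\le 0$, hence $u_k\ge0$. The strong maximum principle and the Hopf lemma at $\partial M$ then give $u>0$. For the uniform lower bound: outside a compact set $u$ is harmonic with Neumann data, so its infimum on the other ends is attained on $\partial U$ or approached along $\mathcal E$ where $u\to1$. Positivity on the compact set $\overline{U\setminus\mathcal E}$ then gives $\inf_M u>0$; the same bound applies to the $u_k$ with constants independent of $k$.

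For the asymptotics, along $\mathcal E$ the function $v$ has finite Dirichlet energy and is harmonic with respect to $g$ with $\partial_{\mathbf n}v=0$. We reflect evenly across $\{x_n=0\}$ and use the standard expansion for harmonic functions on AF ends with $h=O(r^{2-n})$, which gives $v=Ar^{2-n}+\omega$ with the stated decay of $\omega$. Integrating $\Delta u=fu$ over $\{r\le\rho\}\cap(\text{region containing }U)$ and using the boundary condition, the flux at infinity, which is $-(n-2)A|\mathbb S^{n-1}|/2$ over the hemisphere, equals $\int fu+\int\tilde f u$. This gives the formula for $A$, provided the flux through $\partial U\cap\mathring M$ away from $\mathcal E$ vanishes.

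That vanishing is exactly the first flux identity. Integrate $\Delta u=0$ over the complement of $U$ in $\Omega_k$ away from $\mathcal E$, using $\partial_{\mathbf n}u=0$ on $\partial M$ there and $\partial_{\mathbf n'}u_k=0$ on $S_k$; passing to the limit gives $\int_{\partial U\cap\mathring M}\partial_{\mathbf n'}u=0$. For the second identity, integrate $\operatorname{div}(u\nabla u)=|\nabla u|^2$ over the same region. The result is $\int_{\partial U}u\,\partial_{\mathbf n'}u=-\int_{\Omega_k\setminus U}|\nabla u_k|^2\le0$, with the sign coming from the orientation of $\mathbf n'$, and this passes to the limit. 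I expect the main obstacle to be the uniform coercivity across the exhaustion, that is, extending Lemma~\ref{Lem: Sobolev} from $C_c^\infty(\bar U)$ to functions not vanishing on $\partial U$.
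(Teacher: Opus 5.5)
Your route is essentially the paper's. It truncates the other ends with a Neumann condition and imposes a Dirichlet condition far out on $\mathcal E$. It solves $\Delta v-fv=f$ with Robin data by Lax--Milgram and obtains energy and $L^{2n/(n-2)}(U)$ bounds uniform in the exhaustion, then passes to a local limit. Positivity comes from testing with $u_k^-$, and the flux identities and the formula for $A$ come from integrating over $\Omega_k\setminus U$ and over $U\cap\{r\le\rho\}$. Collapsing the paper's two-parameter exhaustion ($R\to\infty$ on $U_{i,R}$, then $i\to\infty$) into one family $\Omega_k$ is harmless, since all the estimates are uniform in both parameters.

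However, the step you single out as the main obstacle rests on a misreading, and the fix you sketch would not work as stated. Lemma~\ref{Lem: Sobolev} is stated for $\phi\in C_c^\infty(\bar U)$, i.e.\ functions with compact support in the \emph{closure} $\bar U$. This only forces vanishing far out along $\mathcal E$. It imposes no condition on $\partial U\cap\mathring M$, just as it imposes none on $\partial M\cap\bar U$, where the trace inequality lives. (The inequality is true in this form because the end is nonparabolic for $n\ge3$; e.g.\ Hardy's inequality on the end carries a favorable inner boundary term.) Hence the restriction to $U$ of any test function vanishing near $\{r=\rho_k\}$ is admissible. Then $Q(\phi)\ge\frac12\int|\nabla\phi|^2$ follows in one line with exactly the constants $c_S/4$, $c_T/4$ of the hypothesis; this is what the paper does.

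Your workaround instead invokes ``a Sobolev-type bound on the full $\Omega_k$'', and this fails uniformly in $k$ when the other ends are arbitrary. On a cylindrical end, a function equal to $1$ on a long truncated cylinder and decaying like $r^{2-n}$ along $\mathcal E$ has bounded Dirichlet energy but unbounded $L^{2n/(n-2)}(\Omega_k)$ norm. And even with a correct local absorption of the $|\nabla\chi|^2\phi^2$ term, the cutoff changes the constants. The hypothesis $\|f_-\|_{L^{n/2}}\le c_S/4$ would then no longer yield the factor $\frac14$.

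There are also three smaller points.

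First, for $\inf_Mu>0$ the order of the argument matters. The minimum principle cannot be applied to the limit $u$ on the noncompact set $M\setminus U$, because a Neumann-harmonic function on a nonparabolic end may approach its infimum at infinity. Instead, argue as follows. Since $u>0$ on the compact interface $\partial U\cap\mathring M$, you get $u_k\ge c$ there for large $k$. The mixed minimum principle on the compact set $\Omega_k\setminus U$ propagates this bound, and then you pass to the limit.

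Second, to start the local elliptic estimates on the other ends you need $L^2_{\mathrm{loc}}$ bounds. These follow from the energy bound via a Poincar\'e inequality anchored in $U$.

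Third, even reflection across $\{x_n=0\}$ is fine for $L^\infty$ bounds and decay. But in general $g_{\alpha n}\ne0$ on $\partial M$, so the reflected coefficients are discontinuous and the $g$-conormal condition is not $\partial_{x_n}v=0$. The expansion with $r^2|\partial^2\omega|\le Cr^{1-n}$ should instead come from the half-space Neumann kernel applied to $\Delta_0v=\partial_i(O_2(r^{2-n})\partial_jv)$ with $\partial_{x_n}v=O_2(r^{2-n})|\partial v|$, as in the paper.
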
   
\begin{proof}
	First, choose an increasing sequence of smooth domains
	$U=U_0\subset U_1\subset U_2\subset\cdots\subset M$ such that each $U_i$
	contains the distinguished end $\mathcal E$,
	$\overline{U_i\setminus\mathcal E}$ is compact in $M$, and
	$M=\bigcup_{i=0}^{\infty}U_i$.
	Thus, for each $i$, all ends other than $\mathcal E$ are truncated by $\partial U_i\cap \mathring{M}$. 
	
	Fix $i\ge0$, and consider the following boundary value problem: 
	\begin{equation}\label{Eq: v_i}
		\left\{
		\begin{array}{ccc}
			\Delta_g v_i-fv_i=f &\text{in}&   U_{i},\\
			\frac{\partial v_i}{\partial\mathbf{n} }+ \tilde{f}  v_i= -\tilde{f} &\text{on}&\partial M \cap U_i 
			\\ \frac{\partial v_i}{\partial  \mathbf n'_i}=0&\text{on}&\partial U_i\cap \mathring{M}, 
		\end{array}
		\right.
	\end{equation} 
	where $\mathbf n'_i$ denotes the outward unit normal to the domain $U_i$ along
	$\partial U_i\cap \mathring{M}$. To solve this problem on the unbounded domain
	$U_i$, for all sufficiently large $R$ consider the truncation
	$U_{i,R}=(U_i\setminus\mathcal E)\cup\{x\in\mathcal E:r(x)\le R\}$,
	so that
		\begin{align*}
			\partial U_{i,R}
			&=(\partial M\cap U_{i,R})\cup(\partial U_i\cap \mathring{M})
			\cup\partial B_R,\\
			\partial B_R
			&=\{x\in\mathcal E:r(x)=R\},
			\qquad\text{a coordinate hemisphere}.
		\end{align*}
	On $U_{i,R}$, we consider the following approximating problem:
	\begin{equation}\label{Eq: U_iR}
		\left\{
		\begin{array}{ccc}
			\Delta_g v_{i,R}-fv_{i,R}=f &\text{in}&   U_{i,R},\\
			\frac{\partial v_{i,R}}{\partial\mathbf{n} }+ \tilde{f}  v_{i,R}=  -\tilde{f} &\text{on}&\partial M \cap U_{i,R} 
			\\ \frac{\partial v_{i,R}}{\partial  \mathbf n'_i}=0&\text{on}&\partial U_i\cap \mathring{M}\\ 
			v_{i,R}=0&\text{on}&\partial B_R.
		\end{array}
		\right.
	\end{equation}
	We solve \eqref{Eq: U_iR} weakly in the closed subspace of
	$H^1(U_{i,R})$ consisting of functions with zero trace on $\partial B_R$.
	Extending such functions by zero across $\partial B_R$,
	\eqref{Eq: Sobolev}, \eqref{Eq: Sobolev trace}, H\"older's inequality, and
	\eqref{Eq: f slight negative} give
	\begin{align*}
		&\int_{U_{i,R}}|\nabla_g\phi|^2\,\mathrm d\mu_g
		+\int_{U_{i,R}}f\phi^2\,\mathrm d\mu_g
		+\int_{\partial M\cap U_{i,R}}\tilde f\phi^2\,\mathrm d\sigma_g\\
		&\quad\geq\int_{U_{i,R}}|\nabla_g\phi|^2\,\mathrm d\mu_g
		-\int_Uf_-\phi^2\,\mathrm d\mu_g
		-\int_{\partial M\cap U}\tilde f_-\phi^2\,\mathrm d\sigma_g\\
		&\geq\int_{U_{i,R}}|\nabla_g\phi|^2\,\mathrm d\mu_g
		-\frac{c_S}{4}\left(\int_U|\phi|^{\frac{2n}{n-2}}\,\mathrm d\mu_g\right)^{\frac{n-2}{n}}
		-\frac{c_T}{4}\left(\int_{\partial M\cap U}|\phi|^{\frac{2(n-1)}{n-2}}\,\mathrm d\sigma_g\right)^{\frac{n-2}{n-1}}\\
		&\geq\frac12\int_{U_{i,R}}|\nabla_g\phi|^2\,\mathrm d\mu_g.
	\end{align*}
	Since the trace of $\phi$ vanishes on $\partial B_R$, the Poincar\'e
	inequality shows that the associated bilinear form is coercive. Moreover, the
	linear functional
	\[
	\phi\longmapsto-\int_{U_{i,R}}f\phi\,\mathrm d\mu_g
	-\int_{\partial M\cap U_{i,R}}\tilde f\phi\,\mathrm d\sigma_g
	\]
	is bounded on this space by H\"older's inequality and the same Sobolev and
	trace inequalities. The Lax--Milgram theorem therefore gives a unique weak
	solution $v_{i,R}$ of \eqref{Eq: U_iR}. Standard regularity for mixed boundary
	value problems gives the required $C^{2,\alpha}$ regularity away from the
	interfaces of the boundary conditions.

	Testing \eqref{Eq: U_iR} with $v_{i,R}$ and using the boundary conditions,
	we obtain
	\begin{align*}
		\int_{ U_{i,R}}|\nabla_g v_{i,R}|^2\,\mathrm d\mu_g
		=&-\int_{  U_{i,R}}f v_{i,R}^2\,\mathrm d\mu_g
		-\int_{ U_{i,R}}f v_{i,R}\,\mathrm d\mu_g \\
		&-\int_{\partial M\cap U_{i,R}}\tilde f v_{i,R}^2\,\mathrm d\sigma_g
		-\int_{\partial M\cap U_{i,R}}\tilde f v_{i,R}\,\mathrm d\sigma_g .
	\end{align*} 
	Moreover, \eqref{Eq: Sobolev}, \eqref{Eq: Sobolev trace}, and H\"older's
	inequality give
	\begin{align*}
		&\frac{c_S}{2}\left(\int_U|v_{i,R}|^{\frac{2n}{n-2}}\,\mathrm d\mu_g\right)^{\frac{n-2}{n}}
		+\frac{c_T}{2}\left(\int_{\partial M\cap U}|v_{i,R}|^{\frac{2(n-1)}{n-2}}\,\mathrm d\sigma_g\right)^{\frac{n-2}{n-1}}\\
		&\quad\leq\int_{U_{i,R}}|\nabla_gv_{i,R}|^2\,\mathrm d\mu_g\\
		&\quad\leq\left(\int_U|f_-|^{\frac n2}\,\mathrm d\mu_g\right)^{\frac2n}
		\left(\int_U|v_{i,R}|^{\frac{2n}{n-2}}\,\mathrm d\mu_g\right)^{\frac{n-2}{n}}\\
		&\qquad+\left(\int_U|f|^{\frac{2n}{n+2}}\,\mathrm d\mu_g\right)^{\frac{n+2}{2n}}
		\left(\int_U|v_{i,R}|^{\frac{2n}{n-2}}\,\mathrm d\mu_g\right)^{\frac{n-2}{2n}}\\
		&\qquad+\left(\int_{\partial M\cap U}|\tilde f_-|^{n-1}\,\mathrm d\sigma_g\right)^{\frac1{n-1}}
		\left(\int_{\partial M\cap U}|v_{i,R}|^{\frac{2(n-1)}{n-2}}\,\mathrm d\sigma_g\right)^{\frac{n-2}{n-1}}\\
		&\qquad+\left(\int_{\partial M\cap U}|\tilde f|^{\frac{2(n-1)}n}\,\mathrm d\sigma_g\right)^{\frac{n}{2(n-1)}}
		\left(\int_{\partial M\cap U}|v_{i,R}|^{\frac{2(n-1)}{n-2}}\,\mathrm d\sigma_g\right)^{\frac{n-2}{2(n-1)}}\\
		&\quad\leq\frac{c_S}{4}\left(\int_U|v_{i,R}|^{\frac{2n}{n-2}}\,\mathrm d\mu_g\right)^{\frac{n-2}{n}}
		+C_f\left(\int_U|v_{i,R}|^{\frac{2n}{n-2}}\,\mathrm d\mu_g\right)^{\frac{n-2}{2n}}\\
		&\qquad+\frac{c_T}{4}\left(\int_{\partial M\cap U}|v_{i,R}|^{\frac{2(n-1)}{n-2}}\,\mathrm d\sigma_g\right)^{\frac{n-2}{n-1}}
		+C_{\tilde f}\left(\int_{\partial M\cap U}|v_{i,R}|^{\frac{2(n-1)}{n-2}}\,\mathrm d\sigma_g\right)^{\frac{n-2}{2(n-1)}},
	\end{align*}
	where
	\[
	C_f:=\left(\int_U|f|^{\frac{2n}{n+2}}\,\mathrm d\mu_g\right)^{\frac{n+2}{2n}},
	\qquad
	C_{\tilde f}:=\left(\int_{\partial M\cap U}|\tilde f|^{\frac{2(n-1)}n}\,\mathrm d\sigma_g\right)^{\frac{n}{2(n-1)}}.
	\]
	Applying Young's inequality, we obtain
	\begin{align*}
		&c_S\left(\int_U|v_{i,R}|^{\frac{2n}{n-2}}\,\mathrm d\mu_g\right)^{\frac{n-2}{n}}
		+c_T\left(\int_{\partial M\cap U}|v_{i,R}|^{\frac{2(n-1)}{n-2}}\,\mathrm d\sigma_g\right)^{\frac{n-2}{n-1}}\\
		&\qquad\leq\frac{16C_f^2}{c_S}+\frac{16C_{\tilde f}^2}{c_T}.
	\end{align*}
	Returning to the preceding energy identity and using this estimate, we also obtain
	\begin{equation}\label{Eq: uniform energy viR}
		\int_{U_{i,R}}|\nabla_gv_{i,R}|^2\,\mathrm d\mu_g\leq C,
	\end{equation}
	where $C$ is independent of $i$ and $R$.
	For each fixed $i$, the preceding bounds and an anchored Poincar\'e inequality give uniform local $L^2$ bounds on $U_i$. Take an increasing sequence $\{R_j\}$ with $R_j\to\infty$. The interior and boundary estimates for linear elliptic equations
	(see \cite[Theorems 5.6.2--5.6.3]{Morrey66} and
	\cite[Theorem 6.3.9]{Morrey66}, respectively) imply that  $\{v_{i,R_j}\}$ is bounded in $C^{2,\alpha}_{\mathrm{loc}}(U_i)$. Hence, up to a subsequence,  we have $ v_{i,R_j}\to v_i$  in $C^2$ on compact subsets of $U_i$. The limit function $v_i$ solves \eqref{Eq: v_i} and satisfies the preceding critical Sobolev bound uniformly in $i$. Fatou's lemma and \eqref{Eq: uniform energy viR} also give
	\[
	\int_{U_i}|\nabla_gv_i|^2\,\mathrm d\mu_g\leq C.
	\]
	We next upgrade these integral estimates to a bound uniform in $i$. On fixed
	compact subsets of $U$, the preceding $L^{\frac{2n}{n-2}}$ and energy bounds,
	together with the local interior and Robin boundary estimates, give a uniform
	$L^\infty$ bound. On the asymptotically flat part of $U$, the same conclusion
	follows from the scale-invariant
	$L^{\frac{2n}{n-2}}$-to-$L^\infty$ estimate on annuli, using reflection near
	$\partial M$. Applying the anchored Poincar\'e inequality and the local elliptic
	estimates on a fixed two-sided collar of $\partial U\cap\mathring M$, we obtain
	\[
	\sup_{\partial U\cap\mathring M}|v_i|\leq C.
	\]
	Since $v_i$ is harmonic on $U_i\setminus U$ and satisfies homogeneous Neumann conditions on the remaining boundary pieces, the mixed-boundary maximum principle yields
	\begin{equation}\label{Eq: L infty bound vi}
		\sup_{ U_i}|v_i|\leq C_0
	\end{equation}
	for some constant $C_0$ independent of $i$. Moreover, the annular estimate and the $L^{\frac{2n}{n-2}}$ bound on the distinguished end imply
	\begin{equation}\label{Eq: 0 limit vi}
		v_i(x)\to0\quad\text{as }r(x)\to\infty\text{ along }\mathcal E.
	\end{equation}
	Since $\Delta_gv_i=0$ in $U_i\setminus U$, the divergence theorem gives
	\begin{align}\label{Eq: v_i p1}
		\int_{\partial U\cap \mathring{M}}\frac{\partial v_i}{\partial\mathbf n'}\,\mathrm d\sigma_g
		&=\int_{\partial U_i\cap \mathring{M}}\frac{\partial v_i}{\partial\mathbf n_i'}\,\mathrm d\sigma_g
		+\int_{\partial M\cap(U_i\setminus U)}\frac{\partial v_i}{\partial\mathbf n}\,\mathrm d\sigma_g
		=0.
	\end{align}
	Here we have used the boundary conditions together with the fact that $f$ and $\tilde f$ are supported in $U$. Similarly, multiplying $\Delta_gv_i=0$ by $v_i$ and integrating by parts over $U_i\setminus U$ yields
	\begin{equation}\label{Eq: v_i p2}
		\int_{\partial U\cap \mathring{M}}v_i\frac{\partial v_i}{\partial\mathbf n'}\,\mathrm d\sigma_g
		=-\int_{ U_i\setminus U }|\nabla_gv_i|^2\,\mathrm d\mu_g\leq0.
	\end{equation}

	Returning to the original unknown, set $u_i=1+v_i$. Then $u_i$ satisfies
	\begin{equation*}
		\left\{
		\begin{array}{ccc}
			\Delta_g u_i-fu_i=0 &\text{in}&  U_i,\\
			\frac{\partial u_i}{\partial\mathbf n}+\tilde f u_i=0 &\text{on}& \partial M\cap U_i,\\
			\frac{\partial u_i}{\partial\mathbf n_i'}=0 &\text{on}& \partial U_i\cap \mathring{M}.
		\end{array}
		\right.
	\end{equation*}
	We first show that $u_i>0$ in $ U_i$. By \eqref{Eq: 0 limit vi}, $u_i\to1$ as $r(x)\to\infty$ along $\mathcal E$. It therefore suffices to prove that $u_i\geq0$, since the Harnack inequality \cite[Theorem 8.20]{GT2001} then yields the strict positivity of $u_i$. Suppose, to the contrary, that $\Omega_{i,-}=\{x\in U_i:u_i(x)<0\}$ is nonempty. By the construction of $U_i$, the distinguished end $\mathcal E$ is the only noncompact end of $U_i$. Since $u_i>0$ near infinity along $\mathcal E$, the set $\Omega_{i,-}$ is relatively compact in $U_i$. Testing the weak equation with the negative part of $u_i$, and using the boundary conditions, we obtain 
	\begin{equation*}
		\int_{\Omega_{i,-}}|\nabla_g u_i|^2\,\mathrm d\mu_g
		=-\int_{\Omega_{i,-}}fu_i^2\,\mathrm d\mu_g-\int_{\partial M\cap\Omega_{i,-}}\tilde f\,u_i^2\,\mathrm d\sigma_g.
	\end{equation*}
	By H\"older's inequality, \eqref{Eq: Sobolev}, \eqref{Eq: Sobolev trace}, and \eqref{Eq: f slight negative}, we have
	\begin{align*}
		\int_{\Omega_{i,-}}|\nabla_g u_i|^2\,\mathrm d\mu_g
		&\leq \left(\int_{U }|f_-|^{\frac n2}\,\mathrm d\mu_g\right)^{\frac2n}
		\left(\int_{\Omega_{i,-}\cap U}|u_i|^{\frac{2n}{n-2}}\,\mathrm d\mu_g\right)^{\frac{n-2}{n}}\\
		&\quad+\left(\int_{\partial M \cap U}|\tilde f_-|^{n-1}\,
		\mathrm d\sigma_g\right)^{\frac1{n-1}} \left(\int_{\partial M\cap\Omega_{i,-}\cap U}
		|u_i|^{\frac{2(n-1)}{n-2}}\,\mathrm d\sigma_g
		\right)^{\frac{n-2}{n-1}}\\
		&\leq \frac12\int_{\Omega_{i,-}}|\nabla_g u_i|^2\,\mathrm d\mu_g.
	\end{align*}
	Hence $|\nabla_g u_i|\equiv0$ in $\Omega_{i,-}$. Since each connected component of $\Omega_{i,-}$ has nonempty boundary on which $u_i=0$, this is impossible. Thus $u_i\geq0$ in $U_i$. The Harnack inequality \cite[Theorem 8.20]{GT2001}, together with the boundary point lemma and the Robin boundary condition, gives $u_i>0$.

	The preceding bounds allow us to pass to the limit as $i\to\infty$ and
	construct the desired solution of \eqref{system u}. By
	\eqref{Eq: L infty bound vi}, the interior and boundary Schauder estimates,
	and a diagonal argument, after passing to a subsequence we have $u_i\to u$ in
	$C^2$ on compact subsets of $M$ for some nonnegative function $u$ satisfying
	\[
	\Delta_g u-fu=0\quad\text{in }M,\qquad
	\frac{\partial u}{\partial\mathbf n}+\tilde f u=0\quad\text{on }\partial M.
	\]
	We first verify the normalization at $\mathcal E$ and the global positive
	lower bound.
	
	Set $v=u-1$. Passing to the limit in the preceding energy estimate gives  
	\begin{equation}\label{Eq: integral v}
		c_S\left(\int_{  U}|v|^{\frac{2n}{n-2}}\,\mathrm d\mu_g\right)^{\frac{n-2}{n}}
		+c_T\left(\int_{\partial M\cap U}|v|^{\frac{2(n-1)}{n-2}}\,\mathrm d\sigma_g\right)^{\frac{n-2}{n-1}}
		\leq C.
	\end{equation}  
	Applying the local $L^p$-to-$L^\infty$ estimate \cite[Theorem 8.17]{GT2001} on the compact part of $U$ and on asymptotic annuli, together with the standard reflection argument near $\partial M$, we obtain
	\[
	\sup_U|v|\le C
	\qquad\text{and}\qquad
	v(x)\to0\quad\text{as }r(x)\to\infty\text{ along }\mathcal E.
	\]  
	Consequently, $u\to 1$ at infinity along $\mathcal E$. In particular, $u\not\equiv0$, and the Harnack inequality \cite[Theorem 8.20]{GT2001}, together with the boundary point lemma, yields $u>0$ on $M$.
	Since $u\to1$ along $\mathcal E$ and $\overline{U\setminus\mathcal E}$ is
	compact, positivity gives $\inf_Uu>0$. The local convergence $u_i\to u$
	on the compact interface $\partial U\cap\mathring M$ therefore gives a
	constant $c>0$, independent of all sufficiently large $i$, such that
	$u_i\ge c$ there. On $U_i\setminus U$, the function $u_i$ is harmonic and
	satisfies homogeneous Neumann conditions on all boundary pieces other than
	that interface. The mixed-boundary minimum principle gives $u_i\ge c$ on
	$U_i\setminus U$. Passing to the limit on compact subsets yields $u\ge c$
	on $M\setminus U$. Thus $\inf_Mu>0$.
	
	We then determine the asymptotic behavior and flux of $u$. Outside a
	compact subset of $\mathcal E$, the function $v=u-1$ is $g$-harmonic and
	satisfies the homogeneous conormal condition on the noncompact boundary. In
	the coordinates of Definition~\ref{Def A.F.}, these equations take the form
	\begin{align*}
	\Delta_0v&=-\partial_i\bigl(O_2(r^{2-n})\partial_jv\bigr)
	&&\text{in }\mathbb R^n_+\setminus\overline{\mathbb B}^+_R,\\
	\partial_{x_n}v&=O_2(r^{2-n})|\partial v|
	&&\text{on }\partial\mathbb R^n_+\setminus\mathbb B_R.
	\end{align*}
	Use the Euclidean Neumann kernel on $\mathbb R^n_+$, namely the sum of the fundamental solution and its reflection. The annular estimates and \eqref{Eq: integral v} give the initial decay required in the potential-theoretic argument of \cite[Lemma 3.2]{SY79}. Applying the corresponding Neumann Green representation and iterating its standard weighted estimates first gives $|v|+r|\partial v|+r^2|\partial^2v|\leq Cr^{2-n}$. Substituting this estimate back into the Green representation and using the coefficient decay $O_2(r^{2-n})$ separates the monopole term from the integrable error and gives $v=Ar^{2-n}+\omega$ along $\mathcal E$, where $A$ is a constant and $|\omega|+r|\partial\omega|+r^2|\partial^2\omega|\leq Cr^{1-n}$.
	Finally, since $u_i=1+v_i$, using \eqref{Eq: v_i p1} and \eqref{Eq: v_i p2} and passing to the limit using the local $C^2$ convergence of $u_i$ gives 
	\[
	\int_{\partial U\cap \mathring{M}}\frac{\partial u}{\partial\mathbf n'}\,\mathrm d\sigma_g=0,
	\qquad
	\int_{\partial U\cap \mathring{M}}u\frac{\partial u}{\partial\mathbf n'}\,\mathrm d\sigma_g\leq0.
	\]
	To identify the coefficient $A$, for $R$ sufficiently large let
	$U_R=(U\setminus\mathcal E)\cup\{x\in\mathcal E:r(x)<R\}$,
	where $r$ is the asymptotically flat coordinate on the distinguished end $\mathcal E$, and set $S_R=\{r=R\}\cap\mathcal E$. Integrating $\Delta_g u=fu$ over $U_R$ and using the boundary condition in \eqref{system u}, we obtain
	\[
	\int_{U_R}fu\,\mathrm d\mu_g
	=
	\int_{S_R}\frac{\partial u}{\partial\nu}\,\mathrm d\sigma_g
	-\int_{\partial M\cap U_R}\tilde f u\,\mathrm d\sigma_g
	+\int_{\partial U\cap\mathring M}\frac{\partial u}{\partial\mathbf n'}\,\mathrm d\sigma_g.
	\]
	By the zero-flux identity proved above, the last term vanishes. Since
	$u=1+Ar^{2-n}+O(r^{1-n})$, we have
	\[
	\lim_{R\to\infty}
	\int_{S_R}\frac{\partial u}{\partial\nu}\,\mathrm d\sigma_g
	=
	\frac{2-n}{2}|\mathbb S^{n-1}|A.
	\]
	Letting $R\to\infty$ therefore gives
	\[
	\frac{2-n}{2}|\mathbb S^{n-1}|A
	=
	\int_Ufu\,\mathrm d\mu_g
	+
	\int_{\partial M\cap U}\tilde f u\,\mathrm d\sigma_g,
	\]
	which gives the asserted formula for $A$. Thus $u$ has all the required properties.
\end{proof}

The solvability result above yields the density deformation needed before
doubling. The following proposition is an analogue of
\cite[Proposition 4.1]{ABL16}, adapted to a distinguished asymptotically flat
end with arbitrary remaining ends.
\begin{prop}\label{conf:flat:assump}
	Let $(M,g,\mathcal E)$ be a manifold with arbitrary ends and distinguished asymptotically flat end $\mathcal E$. Assume that
	$R_g\ge0$ and $H_g\ge0$.
	Then, for every $\varepsilon>0$, there exists a metric $\bar g$ such that:
	\begin{itemize}
		\item[(i)] $R_{\bar g}\ge0$ and $H_{\bar g}\ge0$ on $M$, while
		$R_{\bar g}\equiv0$ and $H_{\bar g}\equiv0$ near infinity along
		$\mathcal E$;
		\item[(ii)] $\bar g$ is conformally flat near infinity along $\mathcal E$;
		\item[(iii)]
		\[
		\left|\mathfrak m(M,\bar g,\mathcal E)-\mathfrak m(M,g,\mathcal E)\right|<\varepsilon.
		\]
	\end{itemize}
	Moreover, $\bar g=u^{\frac{4}{n-2}}\delta$ near infinity along
	$\mathcal E$, where $u>0$ and $u(x)\to1$ as $r(x)\to\infty$.
\end{prop}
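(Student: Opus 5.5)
We follow \cite[Proposition 4.1]{ABL16}, now using Proposition~\ref{Prop: the conformal factor} in place of the one-ended solvability theorem. Fix a neighborhood $U\supset\mathcal E$ as in Lemma~\ref{Lem: Sobolev}, with constants $c_S,c_T$. Pick a cutoff $\chi_\rho$ on $\mathcal E$ with $\chi_\rho=1$ for $r\le\rho$, $\chi_\rho=0$ for $r\ge2\rho$, and $|\partial\chi_\rho|\le C\rho^{-1}$, $|\partial^2\chi_\rho|\le C\rho^{-2}$. Extend it by $1$ to the rest of $M$. Set
\[
g_\rho=\chi_\rho g+(1-\chi_\rho)\delta .
\]
This metric equals $g$ away from the far part of $\mathcal E$, and it is Euclidean for $r\ge2\rho$. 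On the annulus $\rho\le r\le2\rho$, the decay $|h|+r|\partial h|+r^2|\partial^2h|\le Cr^{2-n}$ gives
\[
|R_{g_\rho}|\le C\rho^{-n},\qquad |H_{g_\rho}|\le C\rho^{1-n}.
\]
At the boundary, $\chi_\rho$ should be chosen with $\partial_n\chi_\rho=0$ along $\partial\mathbb R^n_+$, for instance radial, so that no extra boundary terms appear. The key point is that $g_\rho$ is still defined on all of $M$, and nothing is changed on the arbitrary ends.

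Next define $f=R_{g_\rho}/a_n$ and $\tilde f=H_{g_\rho}/b_n$, up to the sign convention of the outward normal. These have compact support in $U$, up to the region $r\le2\rho$; where they are nonnegative on the rest of $U$ one can simply keep them. However, Proposition~\ref{Conf factor requires} compact support, so I would instead take $f=\chi_{2\rho}R_{g_\rho}/a_n$. Since $R_{g_\rho}=0$ for $r\ge2\rho$, this already has compact support in $U$, and the same holds for $\tilde f$. The negative parts live only on the annulus, where
\[
\|f_-\|_{L^{n/2}}\le C\rho^{-n}\rho^{2}=C\rho^{2-n}\to0,\qquad \|\tilde f_-\|_{L^{n-1}}\le C\rho^{1-n}\rho^{\frac{n-1}{n-1}\cdot 1}\to0 .
\]
The Sobolev constants for $g_\rho$ are uniformly comparable to those for $g$, since $g_\rho$ and $g$ are uniformly equivalent. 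Hence \eqref{Eq: f slight negative} holds for $\rho$ large. Proposition~\ref{Prop: the conformal factor}, applied to $(M,g_\rho)$, then gives a positive $u_\rho$ with $L_{g_\rho}u_\rho=0$, $B_{g_\rho}u_\rho=0$, $u_\rho\to1$ and $\inf u_\rho>0$. Set $\bar g=u_\rho^{4/(n-2)}g_\rho$. Then $R_{\bar g}\equiv0$ and $H_{\bar g}\equiv0$ everywhere, which gives (i). For $r\ge2\rho$ we have $\bar g=u_\rho^{4/(n-2)}\delta$, which gives (ii) and the final assertion. Completeness of $\bar g$ on the other ends follows from $\inf u_\rho>0$ together with local boundedness.

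The main step is (iii). Because $\bar g$ is conformally flat outside a compact set, $\mathfrak m(M,\bar g,\mathcal E)=A_\rho$, up to the normalization $2A_\rho$ versus $A_\rho$ for half-spaces. Here $A_\rho$ is the coefficient from Proposition~\ref{Prop: the conformal factor}, so
\[
\mathfrak m(\bar g)=c_n\Big(\int_U R_{g_\rho}u_\rho+\int_{\partial M\cap U}H_{g_\rho}u_\rho\Big)
\]
after the normalization check. On the other hand, the same flux computation for $g$ expresses $\mathfrak m(g)$ as the limit of the boundary integrals of $g_\rho$ on the sphere of radius $2\rho$, minus the corresponding integrals over $r\le2\rho$. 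One then compares the two expressions. I would write $u_\rho=1+v_\rho$ and note that, on $\{r\le\rho\}$, $R_{g_\rho}=R_g\ge0$ with $R_g=O(r^{-q})$, $q>n$, and $H_g\in L^1$. The annulus contributions are then bounded by $C\rho^{-n}\rho^n\sup|u_\rho|$ terms, which do not obviously vanish. The cleaner route, as in ABL16, is the variational formula for the mass: $\mathfrak m(g_\rho)=\mathfrak m(g)$ up to an error $O(\rho^{2-n})$ coming from the surface integral at $r=2\rho$. Then
\[
\mathfrak m(\bar g)-\mathfrak m(g_\rho)=c\int\big(R_{g_\rho}v_\rho\big)+\text{boundary analogue},
\]
and this is estimated by Hölder's inequality together with the energy bound $\|v_\rho\|_{L^{2n/(n-2)}}\le C\big(\|f\|_{L^{2n/(n+2)}}+\|\tilde f\|_{L^{2(n-1)/n}}\big)$ from the proof of Proposition~\ref{Prop: the conformal factor}. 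The obstacle here is that the norms of $f$ over the whole of $U$ need not be small, because $R_g$ is only nonnegative. I would fix this by first using the density step on a fixed compact part: take $f$ to be only the part of $R_{g_\rho}$ supported in $r\ge\rho_0$, with $\rho_0$ large, and put $R_g$ on $r<\rho_0$ into the interior. Then $R_{\bar g}\ge0$ there rather than $\equiv0$, which is all (i) requires. The tails $\|R_g\|_{L^{2n/(n+2)}(r\ge\rho_0)}\to0$ because $q>n$, and similarly for $H_g$ via the $L^1$ and decay assumptions. With this, both the sign and the smallness of the error, $<\varepsilon$, follow by taking $\rho_0$ and then $\rho$ large.
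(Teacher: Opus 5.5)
Your final construction is the paper's. You cut $g$ off to $\delta$ and keep the nonnegative curvature $\chi_{\rho_0}R_g$, $\chi_{\rho_0}H_g$ in the interior, with $2\rho_0\le\rho$ so that $\chi_{\rho_0}R_{g_\rho}=\chi_{\rho_0}R_g$. You then correct conformally only with $f=a_n^{-1}(1-\chi_{\rho_0})R_{g_\rho}$ and $\tilde f=b_n^{-1}(1-\chi_{\rho_0})H_{g_\rho}$. The paper does the same with a single scale, taking $f_s=a_n^{-1}(R_{g_s}-\zeta_sR_g)$, which is supported exactly on the transition annulus. Your H\"older--energy bound $\bigl|\int fv+\int\tilde fv\bigr|\le C\bigl(\|f\|_{L^{2n/(n+2)}}+\|\tilde f\|_{L^{2(n-1)/n}}\bigr)^2$ is a valid substitute for the paper's $\sup_{\mathcal A_s}|u_s-1|=o(1)$.

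Your first choice $f=R_{g_\rho}/a_n$ should be discarded, and for more than the reason you give. It is not compactly supported in $U$: on the arbitrary ends $g_\rho=g$ and $R_g$ is unconstrained, so Proposition~\ref{Prop: the conformal factor} does not apply. Moreover, forcing $R_{\bar g}\equiv0$ lowers the mass by an amount comparable to $\int R_gu+2\int H_gu$ over the inner region, which need not be small.

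The real gap is in (iii).

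\begin{itemize}
\item[(a)] Since $g_\rho=\delta$ for $r\ge2\rho$, we have $\mathfrak m(g_\rho)=0$. So the claim $\mathfrak m(g_\rho)=\mathfrak m(g)+O(\rho^{2-n})$ is false.
\item[(b)] Consequently $\mathfrak m(\bar g)-\mathfrak m(g_\rho)=A_\rho=-\frac{2}{(n-2)\omega_{n-1}}\bigl(\int fu+\int\tilde fu\bigr)$. This is not an integral against $v_\rho$ alone.
\item[(c)] Writing $u=1+v$, your H\"older argument only removes the $v$-terms. The leading term $\int f+\int\tilde f$ is not small even though $\|f\|_{L^{2n/(n+2)}}\to0$; on the annulus $\|f\|_{L^1}$ is merely bounded. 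This term is exactly what reproduces the mass, and you never identify its limit.
\end{itemize}

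What is needed is the paper's flux computation. Use the expansion $R_{g_\rho}=\partial_i\bigl((g_\rho)_{ij,j}-(g_\rho)_{jj,i}\bigr)+O(r^{2-2n})$, the analogous expansion of $2H_{g_\rho}$, and the divergence theorem on $\{\rho\le r\le2\rho\}$. The outer flux vanishes because $g_\rho=\delta$ there. The inner flux equals that of $g$, including the corner term $\int_{\mathbb S^{n-2}_\rho}g_{\alpha n}\eta^\alpha$. Together these give $\int R_{g_\rho}+2\int H_{g_\rho}=-2(n-1)\omega_{n-1}\mathfrak m(g)+o(1)$ over the annulus.

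The retained pieces on $\{\rho_0\le r\le\rho\}$ are bounded by $\|R_g\|_{L^1(r\ge\rho_0)}+2\|H_g\|_{L^1(\partial M\cap\{r\ge\rho_0\})}$. This is where $R_g=O(r^{-q})$ with $q>n$ and $H_g\in L^1$ enter, as $L^1$ tails rather than $L^{2n/(n+2)}$ tails. Since $a_n=4(n-1)/(n-2)$, this yields $A_\rho\to\mathfrak m(g)$.

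Finally, there is no factor of $2$ to sort out. With the hemisphere normalization of Definition~\ref{Def mass}, $\mathfrak m(u^{4/(n-2)}\delta)=A$ exactly.
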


\begin{proof}
	We first cut off the metric to the Euclidean metric near infinity and then
	repair the resulting scalar and boundary mean curvatures conformally. Take a
	fixed nonnegative cutoff function $\zeta:\mathbb R\to[0,1]$ such that
	$\zeta\equiv1$ in $(-\infty,2]$ and $\zeta\equiv0$ in $[3,+\infty)$. For
	$s>1$, set $\zeta_s(x)=\zeta(r(x)/s)$. On the distinguished end $\mathcal E$, define
	$g_s=\zeta_sg+(1-\zeta_s)\delta$,
	and set $g_s=g$ away from the asymptotic coordinate neighborhood of $\mathcal E$. Then
	\[
	|g_s-\delta|+r|\partial g_s|+r^2|\partial^2g_s|\le Cr^{2-n},
	\]
	and the metrics $g_s$ are uniformly equivalent to $g$ in a fixed neighborhood $U$ of $\mathcal E$.
	After decreasing the constants if necessary, the Sobolev and trace inequalities
	of Lemma~\ref{Lem: Sobolev} hold for all sufficiently large $s$, with constants
	$c_S$ and $c_T$ independent of $s$.
	
	We repair the curvature created in the transition annulus by seeking a
	positive solution $u_s$ of
	\[
	\left\{
	\begin{aligned}
		-a_n\Delta_{g_s}u_s+R_{g_s}u_s&=\zeta_sR_gu_s,&&\text{in }M,\\
		b_n\frac{\partial u_s}{\partial\mathbf n_{g_s}}+H_{g_s}u_s&=\zeta_sH_gu_s,&&\text{on }\partial M.
	\end{aligned}
	\right.
	\]
	Equivalently,
	\begin{equation}\label{eq:us}
		\left\{
		\begin{aligned}
			\Delta_{g_s}u_s-f_su_s&=0,&&\text{in }M,\\
			\frac{\partial u_s}{\partial\mathbf n_{g_s}}+\tilde f_su_s&=0,&&\text{on }\partial M,
		\end{aligned}
		\right.
	\end{equation}
	where $f_s=a_n^{-1}(R_{g_s}-\zeta_sR_g)$ and
	$\tilde f_s=b_n^{-1}(H_{g_s}-\zeta_sH_g)$.
	
	The functions $f_s$ and $\tilde f_s$ are supported in the transition region $\{2s\le r\le3s\}$ of the distinguished end. By the asymptotic decay of $g$ and the definition of $g_s$, we have
	\[
	|f_s|\le Cs^{-n},\qquad |\tilde f_s|\le Cs^{1-n}
	\]
	on their respective supports. Hence, for all sufficiently large $s$,
	\[
	\left(\int_U|f_{s,-}|^{\frac n2}\,d\mu_{g_s}\right)^{\frac2n}\le\frac{c_S}{4},
	\qquad
	\left(\int_{\partial M\cap U}|\tilde f_{s,-}|^{n-1}\,d\sigma_{g_s}\right)^{\frac1{n-1}}\le\frac{c_T}{4}.
	\]
	By Proposition~\ref{Prop: the conformal factor}, \eqref{eq:us} admits a positive solution $u_s$ for all sufficiently large $s$.
	
	Set $\bar g_s=u_s^{\frac4{n-2}}g_s$.
	The metric $g_s$ is complete, and Proposition~\ref{Prop: the conformal factor}
	gives $\inf_Mu_s>0$. Hence $\bar g_s$ is complete.
	By the conformal transformation laws and \eqref{eq:us},
	\[
	R_{\bar g_s}=\zeta_sR_gu_s^{-\frac4{n-2}}\ge0,\qquad H_{\bar g_s}=\zeta_sH_gu_s^{-\frac2{n-2}}\ge0.
	\]
	Since $\zeta_s=0$ and $g_s=\delta$ for $r\ge3s$,
	\[
	R_{\bar g_s}=H_{\bar g_s}=0,\qquad \bar g_s=u_s^{\frac4{n-2}}\delta
	\]
	near infinity along $\mathcal E$, with $u_s\to1$ there.
	
	It remains to verify that the deformation changes the mass by an arbitrarily
	small amount. Let
	\[
	\mathcal A_s=\{x\in\mathcal E:2s\le r(x)\le3s\}.
	\]
	The support of $f_s$ and $\tilde f_s$ is contained in $\mathcal A_s$. Since
	\[
	\|f_s\|_{L^{n/2}(\mathcal A_s,g_s)}
	+\|\tilde f_s\|_{L^{n-1}(\partial M\cap\mathcal A_s,g_s)}
	\to0,
	\]
	the energy estimate in Proposition~\ref{Prop: the conformal factor} gives that $u_s-1$ tends to zero in the corresponding critical Sobolev norms on $\mathcal A_s$.
	
	After rescaling $\mathcal A_s$ to the fixed annulus $\{2\le r\le3\}$, the standard interior and boundary $L^p$--to--$L^\infty$ estimates give
	\begin{equation}\label{us transition}
		\sup_{\mathcal A_s}|u_s-1|=o(1).
	\end{equation}
	
	By the asymptotic expansion in Proposition~\ref{Prop: the conformal factor},
	\[
	u_s(x)=1+A_s|x|^{2-n}+O(|x|^{1-n})
	\]
	near infinity along $\mathcal E$, where
	\begin{equation}\label{As approximation}
		A_s=-\frac{2}{(n-2)|\mathbb S^{n-1}|}
		\left(\int_M f_su_s\,d\mu_{g_s}+\int_{\partial M}\tilde f_su_s\,d\sigma_{g_s}\right).
	\end{equation}
	
	Using \eqref{us transition}, the support properties of $f_s$ and $\tilde f_s$, and the
	uniform $L^1$ bounds on the transition region, we obtain
	\begin{align*}
	&\int_M f_su_s\,d\mu_{g_s}
	+\int_{\partial M}\tilde f_su_s\,d\sigma_{g_s}\\
	&\quad=\frac1{a_n}\biggl\{
	\int_M(R_{g_s}-\zeta_sR_g)\,d\mu_{g_s} +2\int_{\partial M}(H_{g_s}-\zeta_sH_g)\,d\sigma_{g_s}
	\biggr\}+o(1).
	\end{align*}
	
	On the asymptotic region, the standard expansions
	\begin{align*}
	R_{g_s}&=\bigl((g_s)_{ij,j}-(g_s)_{jj,i}\bigr)_{,i}+O(r^{2-2n}),\\
	H_{g_s}&=\frac12\left[-\bigl((g_s)_{ij,j}-(g_s)_{jj,i}\bigr)\mathbf n^i
	+(g_s)_{n\alpha,\alpha}\right]+O(r^{3-2n}),
	\end{align*}
	together with the divergence theorem on $\mathcal A_s$, yield
	\begin{align*}
	&\int_M(R_{g_s}-\zeta_sR_g)\,d\mu_{g_s}
	+2\int_{\partial M}(H_{g_s}-\zeta_sH_g)\,d\sigma_{g_s}\\
	&\qquad=-2(n-1)\omega_{n-1}\mathfrak m(M,g,\mathcal E)+o(1).
	\end{align*}
	
	Indeed, the outer flux vanishes since $g_s=\delta$ for $r\ge3s$, whereas the inner flux converges to $2(n-1)\omega_{n-1}\mathfrak m(M,g,\mathcal E)$. The error terms are $O(s^{2-n})$, while the terms involving $(1-\zeta_s)R_g$ and $(1-\zeta_s)H_g$ tend to zero because $R_g\in L^1(\mathcal E)$ and $H_g\in L^1(\partial M\cap\mathcal E)$. Therefore, since $a_n=4(n-1)/(n-2)$, \eqref{As approximation} gives $A_s=\mathfrak m(M,g,\mathcal E)+o(1)$.
	
	Since $\bar g_s=u_s^{\frac4{n-2}}\delta$ for $r\ge3s$, a direct computation from Definition~\ref{Def mass} gives $\mathfrak m(M,\bar g_s,\mathcal E)=A_s$.
	Consequently,
	\[
	\mathfrak m(M,\bar g_s,\mathcal E)
	\longrightarrow
	\mathfrak m(M,g,\mathcal E)
	\qquad\text{as }s\to\infty.
	\]
	Choosing $s$ sufficiently large and setting $\bar g=\bar g_s$ proves (iii), and the proof is complete.
\end{proof}

\subsection{Doubling, local smoothing, and conformal deformation}

We now apply the preceding density deformation and pass from the boundary
problem to a boundaryless one. Fix a metric $\bar g$ supplied by
Proposition~\ref{conf:flat:assump}.

\subsubsection{Doubling along the boundary}

Let $\widetilde M=(M\times\{-1,1\})/\!\sim$ be the smooth double of $M$
across $\partial M$, where the smooth structure is
the one induced by a smooth collar of $\partial M$. Let
$\pi:\widetilde M\to M$ be the natural projection and set
$\mathcal S=\pi^{-1}(\partial M)$.
Thus $\mathcal S$ is the doubling hypersurface. Notice that no uniform
metric tubular radius along the noncompact hypersurface $\mathcal S$ is being
asserted here.

Define $\widetilde g=\pi^*\bar g$. In local Gaussian coordinates on either
side of a point of $\mathcal S$, the metric is of the form
$dt^2+\gamma_{\bar g}(|t|)$.
Consequently, $\widetilde g$ is continuous on $\widetilde M$ and smooth away
from $\mathcal S$. Let $\widetilde{\mathcal E}$ denote the double of the
distinguished end $\mathcal E$.

\begin{lem}
	The doubled metric $\widetilde g$ is $C^2$-asymptotically flat along
	$\widetilde{\mathcal E}$, and
	\[
	\mathfrak m(\widetilde M,\widetilde g,\widetilde{\mathcal E})
	=2\mathfrak m(M,\bar g,\mathcal E).
	\]
\end{lem}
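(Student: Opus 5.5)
The plan is to exploit the explicit structure of $\bar g$ near infinity along $\mathcal E$ given by Proposition~\ref{conf:flat:assump}: outside a compact set of $\mathcal E$ we have $\bar g=u^{\frac4{n-2}}\delta$ with $R_{\bar g}\equiv0$ and $H_{\bar g}\equiv0$. By the conformal transformation laws, this means $u$ is Euclidean harmonic on $\mathbb R^n_+\setminus\overline{\mathbb B}_R^+$ and satisfies $\partial_{x_n}u=0$ on $\partial\mathbb R^n_+\setminus\mathbb B_R$. Moreover, by Proposition~\ref{Prop: the conformal factor} (applied in the proof of Proposition~\ref{conf:flat:assump}), $u=1+Ar^{2-n}+\omega$ with $|\omega|+r|\partial\omega|+r^2|\partial^2\omega|\le Cr^{1-n}$, and $\mathfrak m(M,\bar g,\mathcal E)=A$.

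\emph{Step 1: identify the double near infinity with a reflection.} Define $\widetilde u(x',x_n)=u(x',|x_n|)$ on $\mathbb R^n\setminus\overline{\mathbb B}_R$. Because $u$ is harmonic with vanishing Neumann data on the flat boundary, the even reflection $\widetilde u$ is weakly harmonic across $\{x_n=0\}$. Hence, by elliptic regularity, it is smooth and harmonic there, and it still satisfies $\widetilde u=1+Ar^{2-n}+O_2(r^{1-n})$. The map $x_n\mapsto-x_n$ is an isometry of $\widetilde u^{\frac4{n-2}}\delta$ whose fixed set is $\{x_n=0\}$. Therefore that hyperplane is totally geodesic, and the $\delta$-geodesics normal to it coincide, up to the symmetric reparametrization, with $\bar g$-Gaussian normal geodesics. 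It follows that the Gaussian-coordinate gluing $dt^2+\gamma_{\bar g}(|t|)$ defining $\widetilde g$ agrees, on $\pi^{-1}(\mathcal E\cap\{r>R\})$, with the reflected metric $\widetilde u^{\frac4{n-2}}\delta$ in the coordinates $(x',\pm x_n)$.

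I expect this compatibility between the collar-induced smooth structure of $\widetilde M$ and the reflected Euclidean chart to be the main point needing care. The plan is to note that both structures produce the same topological space and the same continuous metric, and that on this region the reflected chart is a smooth chart for which $\widetilde g$ is smooth. Consequently, on $\widetilde{\mathcal E}$ we may (and will) use these coordinates.

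\emph{Step 2: asymptotic flatness.} In the coordinates from Step 1, $\widetilde g_{ij}-\delta_{ij}=(\widetilde u^{\frac4{n-2}}-1)\delta_{ij}=O_2(r^{2-n})$. In addition, $R_{\widetilde g}\equiv0$ near infinity, so the decay conditions of Definition~\ref{Def A.F.} hold on $\widetilde{\mathcal E}\cong\mathbb R^n\setminus\overline{\mathbb B}_R$, and in fact with smooth control. This gives $C^2$-asymptotic flatness.

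\emph{Step 3: the mass identity.} Write $\phi=\frac4{n-2}Ar^{2-n}$. Then $\widetilde g_{ij}=(1+\phi)\delta_{ij}+O_2(r^{1-n})$ and $\widetilde g_{ij,j}-\widetilde g_{jj,i}=(1-n)\partial_i\phi+O(r^{-n})$, with $\partial_i\phi\,\nu^i=-4Ar^{1-n}$. Over the full sphere $\mathbb S^{n-1}_r$, the ADM flux therefore tends to $4(n-1)A\,\omega_{n-1}$. After dividing by $2(n-1)\omega_{n-1}$, this gives $\mathfrak m(\widetilde M,\widetilde g,\widetilde{\mathcal E})=2A$. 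Running the same computation in Definition~\ref{Def mass} gives half of this flux over the hemisphere, while the boundary term vanishes because $\bar g_{\alpha n}=0$. This recovers $\mathfrak m(M,\bar g,\mathcal E)=A$. Alternatively, the full-sphere integral splits by the reflection symmetry into twice the hemisphere integral. Either way, $\mathfrak m(\widetilde M,\widetilde g,\widetilde{\mathcal E})=2\mathfrak m(M,\bar g,\mathcal E)$.
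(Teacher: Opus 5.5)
Your proposal is correct and follows essentially the paper's route: $H_{\bar g}=0$ and the conformal law give $\partial_{x_n}u=0$, so $u$ reflects evenly and $\widetilde g=\widetilde u^{\frac4{n-2}}\delta$ near infinity along $\widetilde{\mathcal E}$. The full-sphere flux is then twice the hemisphere flux because $\bar g_{\alpha n}=0$; your use of $R_{\bar g}=0$ to get smoothness of $\widetilde u$, and your explicit check that the masses are $2A$ and $A$, are consistent refinements.

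One slip in Step~1 should be corrected. Euclidean normal lines are generally not $\bar g$-geodesics, since the geodesic equation for $u^{\frac4{n-2}}\delta$ would force $\partial_\alpha u=0$ along them. The correct statement is that the normal geodesics of the smooth, reflection-invariant metric $\widetilde u^{\frac4{n-2}}\delta$ restrict on each side to the $\bar g$-Gaussian geodesics, and this is what makes the glued Gaussian chart smooth in the reflected coordinates.
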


\begin{proof}
	By Proposition~\ref{conf:flat:assump}, near infinity along $\mathcal E$ we
	have $\bar g=u^{\frac4{n-2}}\delta$, $H_{\bar g}=0$, and $u\to1$.
	The conformal transformation law for the mean curvature gives
	$\partial_{\nu_\delta}u=0$ on $\partial M$
	near infinity. Hence $u$ extends evenly across the boundary hyperplane to a
	$C^2$ function $\widetilde u$, and
	$\widetilde g=\widetilde u^{\frac4{n-2}}\delta$
	near infinity along $\widetilde{\mathcal E}$. In particular,
	$\widetilde g$ is already $C^2$ there.
	
	Since $\bar g_{\alpha n}=0$ near infinity, the boundary contribution in
	Definition~\ref{Def mass} vanishes. Reflection symmetry then gives
	\begin{align*}
		\mathfrak m(\widetilde M,\widetilde g,\widetilde{\mathcal E})
		&=\frac{1}{2(n-1)\omega_{n-1}}\lim_{r\to\infty}
		\int_{\mathbb S_r^{n-1}}
		\bigl((\widetilde g)_{ij,j}-(\widetilde g)_{jj,i}\bigr)
		\nu^i\,d\mathbb S_r^{n-1}\\
		&=\frac{2}{2(n-1)\omega_{n-1}}\lim_{r\to\infty}
		\int_{\mathbb S_{r,+}^{n-1}}
		\bigl((\bar g)_{ij,j}-(\bar g)_{jj,i}\bigr)
		\nu^i\,d\mathbb S_{r,+}^{n-1}\\
		&=2\mathfrak m(M,\bar g,\mathcal E).
	\end{align*}
\end{proof}

\subsubsection{Local smoothing}

We next describe a smoothing construction that requires geometric control
only in a prescribed neighborhood of the distinguished end. Let
$\widetilde U\subset\widetilde V\subset\widetilde W$ be neighborhoods of
$\widetilde{\mathcal E}$ such that the closures of the
successive differences, as well as $\overline{\widetilde W\setminus\widetilde{\mathcal E}}$, are compact.
Choose the neighborhoods with smooth boundaries.

The doubled metric is already $C^2$ sufficiently far out along
$\widetilde{\mathcal E}$. Therefore, the portion of $\mathcal S\cap
\widetilde W$ on which $\widetilde g$ is not $C^2$ has compact closure. A
finite collection of Gaussian normal charts covers this portion, and the
normal smoothing of Miao \cite[Proposition 3.1]{Miao03} can be performed there
with one parameter $\delta>0$. Denote the resulting $C^2$ metric on
$\widetilde W$ by $h_\delta$. It is chosen to agree with $\widetilde g$
near infinity along $\widetilde{\mathcal E}$ and away from a compact collar
of the nonsmooth part of $\mathcal S\cap\widetilde W$. Moreover,
$h_\delta\to\widetilde g$ uniformly on $\widetilde V$.

To extend this local smoothing, choose an arbitrary complete smooth metric
$h$ on the smooth manifold $\widetilde M$. Let
$\beta\in C^\infty(\widetilde M)$ satisfy
\[
0\le\beta\le1,
\qquad
\beta\equiv1\text{ on }\widetilde V,
\qquad
\operatorname{supp}\beta\subset\widetilde W.
\]
On $\widetilde W$ set $\widetilde g_\delta=\beta h_\delta+(1-\beta)h$, and
set $\widetilde g_\delta=h$ outside $\widetilde W$. Since the cone of
positive-definite symmetric tensors is convex, $\widetilde g_\delta$ is a
global $C^2$ Riemannian metric. It is complete: it agrees with the complete
asymptotically flat metric $\widetilde g$ sufficiently far out along
$\widetilde{\mathcal E}$, agrees with the complete metric $h$ sufficiently
far out along every other end, and differs from these metrics only in compact
transition regions. Moreover,
\[
\widetilde g_\delta=\widetilde g
\quad\text{near infinity along }\widetilde{\mathcal E},
\]
and therefore
\begin{equation}\label{mass smoothing}
	\mathfrak m(\widetilde M,\widetilde g_\delta,
	\widetilde{\mathcal E})
	=\mathfrak m(\widetilde M,\widetilde g,
	\widetilde{\mathcal E}).
\end{equation}

It remains to control the scalar curvature introduced by the smoothing. Let $R_\delta$ denote the scalar curvature of $\widetilde g_\delta$.
Because $H_{\bar g}\ge0$, the singular scalar-curvature contribution of the
double across $\mathcal S$ has the favorable sign. Miao's local estimate,
applied on the compact part of $\mathcal S\cap\widetilde V$ where smoothing
is required, gives a compact set $K\Subset\widetilde V$, independent of
$\delta$, such that
\begin{equation}\label{local Miao support}
	\operatorname{supp}(R_\delta)_-\cap\widetilde V\subset K,
	\qquad
	0\le (R_\delta)_-\le C_K,
\end{equation}
and the support in \eqref{local Miao support} has volume $O(\delta)$.
Consequently,
\begin{equation}\label{Rneg small local}
\begin{aligned}
	\|(R_\delta)_-\|_{L^{n/2}(\widetilde V)}
	&=O(\delta^{2/n}),\\
	\|(R_\delta)_-\|_{L^{\frac{2n}{n+2}}(\widetilde V)}
	&=O\!\left(\delta^{\frac{n+2}{2n}}\right),\\
	\|(R_\delta)_-\|_{L^1(\widetilde V)}
	&=O(\delta).
\end{aligned}
\end{equation}
All norms in this subsection are taken with respect to
$\widetilde g_\delta$.

\subsubsection{Conformal correction}

We now remove the small negative part of the scalar curvature in the protected
neighborhood of the distinguished end. Choose
$\chi\in C^\infty(\widetilde M)$ such that
\[
0\le\chi\le1,
\qquad
\chi\equiv1\text{ on }\widetilde U,
\qquad
\operatorname{supp}\chi\subset\widetilde V,
\]
and fix a compact set $K'$ with $K\Subset K'\Subset\widetilde V$. Choose a
smooth nonnegative function $q_\delta$, supported in $K'$, such that
$q_\delta\ge\chi(R_\delta)_-$ and such that $q_\delta$ satisfies the three estimates in
\eqref{Rneg small local}. By \eqref{Rneg small local}, for all sufficiently
small $\delta$, Zhu's boundaryless conformal-factor result
\cite[Proposition 2.2]{Zhu23}, applied with
$f_\delta=-a_n^{-1}q_\delta$, gives a positive solution
\begin{equation}\label{conf u delta}
	\Delta_{\widetilde g_\delta}u_\delta
	+a_n^{-1}q_\delta u_\delta=0,
	\qquad
	u_\delta\to1
	\quad\text{along }\widetilde{\mathcal E}.
\end{equation}
The exhaustion construction and the maximum principle also give
\begin{equation}\label{u delta geq one}
	u_\delta\ge1\quad\text{on }\widetilde M.
\end{equation}

Set $w_\delta=u_\delta-1$. The flux inequalities in
\cite[Proposition 2.2]{Zhu23} and the same energy estimate as in the proof of
Proposition~\ref{Prop: the conformal factor} give
\begin{equation}
	\|w_\delta\|_{L^{\frac{2n}{n-2}}(\widetilde U)}
	\le C\|q_\delta\|_{L^{\frac{2n}{n+2}}(\widetilde V)}=o(1).
\end{equation}
The local $L^p$-to-$L^\infty$ estimate on the fixed compact set $K'$ then
yields
\begin{equation}\label{local uniform}
	\sup_{K'}|u_\delta-1|=o(1).
\end{equation}

Define $\widehat g_\delta=u_\delta^{\frac4{n-2}}\widetilde g_\delta$.
Using $a_n^{-1}=(n-2)/(4(n-1))$ and \eqref{conf u delta}, the conformal
transformation formula gives
\begin{equation}
	R_{\widehat g_\delta}
	=u_\delta^{-\frac4{n-2}}
	\bigl(R_\delta+q_\delta\bigr).
\end{equation}
In particular,
\begin{equation}\label{R hat nonnegative U}
	R_{\widehat g_\delta}\ge0
	\quad\text{on }\widetilde U.
\end{equation}
Moreover, \eqref{u delta geq one} and the completeness of
$\widetilde g_\delta$ imply that $\widehat g_\delta$ is complete.

Finally, we compare the masses before and after the conformal correction. Along
$\widetilde{\mathcal E}$, Proposition 2.2 of \cite{Zhu23} gives
$u_\delta=1+A_\delta r^{2-n}+O_2(r^{1-n})$, where
\begin{equation}
	A_\delta
	=\frac{a_n^{-1}}{(n-2)|\mathbb S^{n-1}|}
	\int_{\widetilde M}q_\delta u_\delta\,
	d\mu_{\widetilde g_\delta}.
\end{equation}
By \eqref{local uniform} and \eqref{Rneg small local},
$0\le A_\delta\le C\delta$, and hence $A_\delta\to0$. The maximum principle on the complement of $K'$
and the harmonic estimates on the fixed asymptotically flat end also give
uniform weighted estimates for $u_\delta-1$ there. A direct computation using the ADM
normalization in Definition~\ref{Def mass} gives
\begin{equation}\label{mass conformal correction}
	\mathfrak m(\widetilde M,\widehat g_\delta,
	\widetilde{\mathcal E})
	=\mathfrak m(\widetilde M,\widetilde g_\delta,
	\widetilde{\mathcal E})
	+2A_\delta.
\end{equation}
Combining \eqref{mass smoothing} and
\eqref{mass conformal correction}, we obtain
\begin{equation}\label{mass corrected convergence}
	\mathfrak m(\widetilde M,\widehat g_\delta,
	\widetilde{\mathcal E})
	\longrightarrow
	\mathfrak m(\widetilde M,\widetilde g,
	\widetilde{\mathcal E}).
\end{equation}
In addition, because all the metrics agree with $\widetilde g$ in their leading
asymptotic coefficients except for the term $A_\delta r^{2-n}$, and
$A_\delta\to0$, their weighted $W^{2,p}_{-q}$ norms on the fixed end
$\widetilde{\mathcal E}$ are uniformly bounded for every $p>n$ and every
$q\in((n-2)/2,n-2)$.

Thus the corrected metrics are complete, have nonnegative scalar curvature on
the prescribed neighborhood $\widetilde U$, and converge to the doubled mass
while retaining uniform control of the distinguished end. These are precisely
the properties needed in the proof of the main theorem.

\section{Proof of Theorem~\ref{Thm: main A}}
\label{sec:proof-main}

\begin{proof}
	
	\textit{Proof of the nonnegativity of the mass.}
	Assume, for contradiction, that $ 	\mathfrak m(M,g,\mathcal E)<0$. 
	By Proposition~\ref{conf:flat:assump}, we may choose $\bar g$ such that $\mathfrak m(M,\bar g,\mathcal E)<0$. 
	Let $(\widetilde M,\widetilde g)$ be its double and set
	\[
	m_0:=\mathfrak m(\widetilde M,\widetilde g,
	\widetilde{\mathcal E})
	=2\mathfrak m(M,\bar g,\mathcal E)<0.
	\]
	
	Fix $p>n$ and $q\in((n-2)/2,n-2)$. With the ADM normalization in
	Definition~\ref{Def mass}, the quantitative dependence in
	\cite[Corollary 1.6]{LLU} implies that there is a constant $D>0$ with the
	following property: any metric in the above asymptotic class whose mass is at
	most $m_0/2<0$ and whose $W^{2,p}_{-q}$ norm on
	$\widetilde{\mathcal E}$ is bounded by a fixed constant must have either a
	point of negative scalar curvature or a point of incompleteness in the
	$D$-neighborhood of $\widetilde{\mathcal E}$.
	
	We now choose the protected neighborhood large enough for this quantitative
	obstruction. Choose nested neighborhoods
	\[
	\widetilde U\subset\widetilde V\subset\widetilde W
	\]
	as in the preceding subsection, with
	\begin{equation}\label{protected distance}
		\operatorname{dist}_{\widetilde g}
		(\widetilde{\mathcal E},\partial\widetilde U)>3D.
	\end{equation}
	Construct $\widetilde g_\delta$ and $\widehat g_\delta$ using these
	neighborhoods. By \eqref{mass corrected convergence} and the uniform
	weighted asymptotic bounds, for all sufficiently small $\delta$,
	\[
	\frac32m_0
	<\mathfrak m(\widetilde M,\widehat g_\delta,
	\widetilde{\mathcal E})
	<\frac12m_0<0,
	\]
	and the same constant $D$ applies to $\widehat g_\delta$.
	
	On $\widetilde U$, the metrics $\widetilde g_\delta$ converge uniformly to
	$\widetilde g$. Together with \eqref{u delta geq one} and
	\eqref{protected distance}, this gives, for all sufficiently small $\delta$,
	\[
	\operatorname{dist}_{\widehat g_\delta}
	(\widetilde{\mathcal E},\partial\widetilde U)>D.
	\]
	Therefore
	\[
	N_D^{\widehat g_\delta}(\widetilde{\mathcal E})
	\subset\widetilde U.
	\]
	By \eqref{R hat nonnegative U},
	\[
	R_{\widehat g_\delta}\ge0
	\quad\text{on }
	N_D^{\widehat g_\delta}(\widetilde{\mathcal E}).
	\]
	On the other hand, $\widehat g_\delta$ is complete by
	\eqref{u delta geq one}. This contradicts
	\cite[Corollary 1.6]{LLU}. Hence
	\[
	\mathfrak m(M,g,\mathcal E)\ge0.
	\]
	
	\textit{Proof of the rigidity.}

	It remains to analyze the equality case. Assume that
	$\mathfrak m(M,g,\mathcal E)=0$.
	
	\textit{Step 1.}   ${\rm Ric}_g=0$ and $A_g=0$.
	
	Let $k$ be an arbitrary compactly supported symmetric $2$-tensor on $M$, with $\operatorname{supp}k=K$, and consider the variation $g_t=g+tk$. Choose a smooth neighborhood $U$ of the distinguished end $\mathcal E$ such that $K\Subset U$ and $\overline{U\setminus\mathcal E}$ is compact. Since $R_{g_t}-R_g$ and $H_{g_t}-H_g$ are supported in $K$ and tend to zero in $C^k$ as $t\to0$, Proposition~\ref{Prop: the conformal factor} provides, for $|t|$ sufficiently small, a positive solution $u_t$ of
	\begin{equation}\label{conf factor ut}
		\left\{
		\begin{aligned}
			-a_n\Delta_{g_t}u_t+(R_{g_t}-R_g) u_t&=0, &&\text{in }M,\\
			b_n\frac{\partial u_t}{\partial\eta_{g_t}}+(H_{g_t}-H_g)u_t
			&=0, &&\text{on }\partial M,\\
			u_t(x)&\to1, &&\text{as }r(x)\to\infty\text{ along }\mathcal E.
		\end{aligned}
		\right.
	\end{equation} 
	Taking $u_t$ to be the solution selected by the exhaustion construction in
	the proof of Proposition~\ref{Prop: the conformal factor}, the corresponding
	estimates give
	\[
	u_t\longrightarrow1
	\quad\text{in }C^{2,\alpha}_{\mathrm{loc}}(M)
	\]
	as $t\to0$. Outside a fixed compact set, $u_t$ is harmonic and satisfies the
	homogeneous Neumann condition. The maximum principle on the exhaustion
	domains, followed by the Harnack inequality on the fixed compact interface,
	therefore gives
	\[
	\inf_Mu_t>0
	\]
	for all sufficiently small $|t|$.
	
	For $|t|$ sufficiently small, define
	\[
	\hat g_t=u_t^{\frac{4}{n-2}}g_t.
	\]
	Then $\hat g_t$ is a complete conformal metric on $M$. By \eqref{conf factor ut} and the conformal transformation laws,
	\[
	R_{\hat g_t}=u_t^{-\frac{4}{n-2}}R_g\ge0,\qquad H_{\hat g_t}=u_t^{-\frac{2}{n-2}}H_g\ge0.
	\]
	Hence $(M,\hat g_t,\mathcal E)$ remains a manifold with arbitrary ends and distinguished asymptotically flat end $\mathcal E$, and therefore $\mathfrak m(M,\hat g_t,\mathcal E)\ge0$. Since $\hat g_0=g$, and the assumption $\mathfrak m(M,g,\mathcal E)=0$ shows that $t=0$ is a local minimum of $t\mapsto\mathfrak m(M,\hat g_t,\mathcal E)$.

	To extract geometric information from this minimizing property, we compute
	the first variation of the mass
	$\mathfrak m(M,\hat g_t,\mathcal E)$. Since $R_{g_t}-R_g$ and
	$H_{g_t}-H_g$ are compactly supported and are of order $O(|t|)$, the
	estimates in the proof of Proposition~\ref{Prop: the conformal factor} give
	\[
	u_t-1=O(|t|)
	\quad\text{in }C^{2,\alpha}_{\mathrm{loc}}(M).
	\]
	Let $t_j\to0$. After passing to a subsequence, we may assume that
	\[
	\frac{u_{t_j}-1}{t_j}\to\dot u
	\quad\text{in }C^2_{\mathrm{loc}}(M).
	\]
	It follows that
	\[
	\frac{\hat g_{t_j}-g}{t_j}
	\to
	\dot{\hat g}:=k+\frac4{n-2}\dot u\,g
	\quad\text{in }C^1_{\mathrm{loc}}(M).
	\]
	Throughout the following calculation, $\delta$ denotes the limit of the
	corresponding difference quotients along the chosen subsequence $t_j$.
	For $r$ sufficiently large, set $U_r=U\cap\{x\in M: r(x)\le r\}$ and $\Sigma_r=\partial M\cap U_r$. Then $U_r$ is compact. Applying the computation in \cite[Proposition 2.1]{ABL16} to $U_r$, 
	and then letting $r\to\infty$, gives   
	\begin{align}
		&2(n-1)\omega_{n-1}\,\delta\mathfrak m(M,g,\mathcal E)
		\\=&\delta\left(\int_U R_g\,\mathrm d\mu_g
		+2\int_{\partial M\cap U}H_g\,\mathrm d\sigma_g\right)\\
		&\quad+\int_U\Big\langle {\rm Ric}_g-\frac12R_gg,
		\dot{\hat g}\Big\rangle\,\mathrm d\mu_g 
		 +\int_{\partial M\cap U}\Big\langle A_g-H_gh_g,
		\dot{\hat g}^{T}\Big\rangle\,\mathrm d\sigma_g\\
		&\quad-\int_{\partial U\cap\mathring M}\mathbf n'{}^i
		\big(\nabla_j\dot{\hat g}^j{}_i-
		\nabla_i{\rm tr}_g\dot{\hat g}\big)\,\mathrm d\sigma_g  +\int_{\partial M\cap\overline{\partial U\cap\mathring M}}
		\vartheta^\alpha\dot{\hat g}_{\alpha i}\mathbf n^i\,
		\mathrm d\theta_g.
	\end{align}
	Here $\mathbf n'$ is the outward unit normal to $M\cap U$ along $\partial U\cap \mathring{M}$, and $\vartheta$ is the outward conormal to $\partial M\cap \overline{\partial U\cap\mathring M}$ in $\partial M\cap U$.
	
	Since $K\Subset U$, we have $\dot{\hat g}=\frac{4}{n-2}\dot u g$ on $\partial U\cap \mathring{M}$. Therefore
	\begin{align*}
	\mathbf n'{}^i\left(\nabla_j\dot{\hat g}^j{}_i
	-\nabla_i{\rm tr}_g\dot{\hat g}\right)
	&=-\frac{4(n-1)}{n-2}\frac{\partial\dot u}{\partial\mathbf n'}
	=-a_n\frac{\partial\dot u}{\partial\mathbf n'},\\
	\dot{\hat g}_{\alpha i}\mathbf n^i
	&=\frac{4}{n-2}\dot u\,g_{\alpha i}\mathbf n^i=0.
	\end{align*}
	For every $t$, Proposition~\ref{Prop: the conformal factor}, applied with
	the metric $g_t$, gives
	\[
	\int_{\partial U\cap\mathring M}
	\frac{\partial u_t}{\partial\mathbf n'_{g_t}}\,
	\mathrm d\sigma_{g_t}=0.
	\]
	Since $u_0\equiv1$, taking the difference quotient along $t_j$ yields
	\[
	\int_{\partial U\cap\mathring M}
	\frac{\partial\dot u}{\partial\mathbf n'_g}\,
	\mathrm d\sigma_g=0.
	\]
	Consequently, 
	\begin{equation}\label{Eq: artificial boundary vanishes}
	\begin{aligned}
	&\int_{\partial U\cap\mathring M}\mathbf n'{}^i
	\big(\nabla_j\dot{\hat g}^j{}_i-
	\nabla_i{\rm tr}_g\dot{\hat g}\big)\,\mathrm d\sigma_g -\int_{\partial M\cap\overline{\partial U\cap\mathring M}}
	\vartheta^\alpha\dot{\hat g}_{\alpha i}\mathbf n^i\,
	\mathrm d\theta_g=0.
	\end{aligned}
	\end{equation}
	
	By \eqref{conf factor ut} and the conformal transformation laws,
	$R_{\hat g_t}=u_t^{-\frac{4}{n-2}}R_g$ and
	$H_{\hat g_t}=u_t^{-\frac{2}{n-2}}H_g$. Thus
	\begin{align*}
	\delta R_{\hat g}
	&=-\frac{4}{n-2}\dot u\,R_g,\\
	\delta(\mathrm d\mu_{\hat g})
	&=\frac12{\rm tr}_g\dot{\hat g}\,\mathrm d\mu_g
	=\left(\frac12{\rm tr}_gk+
	\frac{2n}{n-2}\dot u\right)\mathrm d\mu_g.
	\end{align*}
	Therefore,
	\begin{align*}
	\delta\left(R_{\hat g}\,\mathrm d\mu_{\hat g}\right)
	&=\left[-\frac{4}{n-2}\dot u\,R_g
	+\frac12R_g\left({\rm tr}_gk+
	\frac{4n}{n-2}\dot u\right)\right]\mathrm d\mu_g\\
	&=\left(2\dot uR_g+
	\frac12R_g\,{\rm tr}_gk\right)\mathrm d\mu_g.
	\end{align*}
	Moreover, using $\dot{\hat g}=k+\frac{4}{n-2}\dot u\,g$, we obtain
	\[
	\begin{aligned}
		\Big\langle{\rm Ric}_g-\frac12R_gg,\dot{\hat g}\Big\rangle
		&=\langle{\rm Ric}_g,k\rangle-\frac12R_g\,{\rm tr}_gk-2\dot uR_g.
	\end{aligned}
	\]  
	Hence
	\begin{align*}
	&2\dot uR_g+\frac12R_g\,{\rm tr}_gk
	+\Big\langle{\rm Ric}_g-\frac12R_gg,
	\dot{\hat g}\Big\rangle =\langle k,{\rm Ric}_g\rangle.
	\end{align*}
	
	Similarly,
	\begin{align*}
	\delta H_{\hat g}
	&=-\frac{2}{n-2}\dot u\,H_g,\\
	\delta(\mathrm d\sigma_{\hat g})
	&=\frac12{\rm tr}_{h_g}\dot{\hat g}^T\,\mathrm d\sigma_g =\left(\frac12{\rm tr}_{h_g}k^T
	+\frac{2(n-1)}{n-2}\dot u\right)\mathrm d\sigma_g,
	\end{align*}
	and hence
	\[
	\delta\left(2H_{\hat g}\,\mathrm d\sigma_{\hat g}\right)
	=\left(4\dot uH_g+H_g\,{\rm tr}_{h_g}k^T\right)\mathrm d\sigma_g.
	\]
	On the other hand,  
	\begin{align*}
	\left\langle A_g-H_gh_g,\dot{\hat g}^T\right\rangle
	&=\langle A_g,k^T\rangle-H_g\,{\rm tr}_{h_g}k^T
	-4\dot uH_g.
	\end{align*}
	Consequently,
	\begin{align*}
	&4\dot uH_g+H_g\,{\rm tr}_{h_g}k^T
	+\left\langle A_g-H_gh_g,\dot{\hat g}^T\right\rangle 
 =\langle k,A_g\rangle.
	\end{align*}
	
	Combining these identities with \eqref{Eq: artificial boundary vanishes},
	and applying the localized variation calculation along the sequence $t_j$,
	we obtain
	\begin{equation}\label{Eq: first variation rigidity}
	\begin{aligned}
	&2(n-1)\omega_{n-1}\lim_{j\to\infty}
	\frac{\mathfrak m(M,\hat g_{t_j},\mathcal E)
	-\mathfrak m(M,g,\mathcal E)}{t_j}\\
	&\qquad=\int_M\langle k,{\rm Ric}_g\rangle\,\mathrm d\mu_g
	+\int_{\partial M}\langle k^T,A_g\rangle\,\mathrm d\sigma_g.
	\end{aligned}
	\end{equation}
	The right-hand side is independent of the sequence and of the
	subsequential limit $\dot u$. Since
	\[
	\mathfrak m(M,\hat g_t,\mathcal E)\ge0,
	\qquad
	\mathfrak m(M,g,\mathcal E)=0,
	\]
	letting $t_j\to0^+$ and $t_j\to0^-$ in
	\eqref{Eq: first variation rigidity} gives
	\[
	0=\int_M\langle k,{\rm Ric}_g\rangle\,\mathrm d\mu_g
	+\int_{\partial M}\langle k^T,A_g\rangle\,\mathrm d\sigma_g.
	\]
	As $k$ is arbitrary, ${\rm Ric}_g=0$ and $A_g=0$.
	
	\medskip
	\textit{Step 2.}  $(M,g,\mathcal E)$ is isometric to the Euclidean half-space.
	
	It remains to identify the global geometry.
	Let $(\tilde M,\tilde g)$ denote the double of $(M,g)$ across $\partial M$. Then $(\tilde M,\tilde g)$ is complete with arbitrary ends, and the double $\tilde{\mathcal E}$ of $\mathcal E$ is a distinguished asymptotically flat end. Its mass satisfies
	\[
	\mathfrak m(\tilde M,\tilde g,\tilde{\mathcal E})=2\mathfrak m(M,g,\mathcal E)=0.
	\] 
	Since ${\rm Ric}_g=0$ and $A_g=0$, the doubled metric $\tilde g$ is $C^{2,\alpha}$ and Ricci-flat across the doubling hypersurface. Elliptic regularity for the Ricci-flat equation in harmonic coordinates then shows that $\tilde g$ is smooth. The rigidity theorem \cite[Theorem 1.2]{Zhu23} therefore yields
	\[
	(\tilde M,\tilde g)\cong(\mathbb R^n,\delta).
	\]
	Finally, the doubling involution corresponds to a reflection across an affine hyperplane in $\mathbb R^n$. Hence
	\[
	(M,g,\mathcal E)\cong(\mathbb R^n_+,\delta).
	\] 
\end{proof}

\end{document}